\newcommand*{\centerfloat}{%
  \parindent \z@
  \leftskip \z@ \@plus 1fil \@minus \textwidth
  \rightskip\leftskip
  \parfillskip \z@skip}
\newcommand{\tr}{{{\mathsf T}}}
\setlist{leftmargin=0pt,itemindent=15pt,labelwidth=8pt,labelsep=7pt,listparindent=15pt}
\title{\bfseries Non-asymptotic Identification of Linear Dynamical Systems Using Multiple Trajectories\thanks{This work is supported by NSF career 1553407, AFOSR Young Investigator Program, and ONR Young Investigator Program. Emails: zhengy@g.harvard.edu; nali@seas.harvard.edu.}
}
\author[1,2]{Yang Zheng}
\author[1,2]{Na Li}
\affil[1]{\small School of Engineering and Applied Sciences,
Harvard University}
\affil[2]{\small Harvard Center for Green Buildings and Cites, Harvard University}
\newtheorem{theorem}{Theorem}[section]
\newtheorem{proposition}{Proposition}[section]
\newtheorem{lemma}{Lemma}[section]
\newtheorem{corollary}{Corollary}[section]
\newtheorem{remark}{Remark}
\begin{document}

\maketitle

\begin{abstract}
This paper considers the problem of linear time-invariant (LTI) system identification using input/output data. Recent work has provided non-asymptotic results on partially observed LTI system identification using a single trajectory but is only suitable for stable systems. We provide finite-time analysis for learning Markov parameters based on the ordinary least-squares (OLS) estimator using multiple trajectories, which covers both stable and unstable systems. For unstable systems, our results suggest that the Markov parameters are harder to estimate in the presence of process noise. Without process noise, our upper bound on the estimation error is independent of the spectral radius of system dynamics with high probability. These two features are different from fully observed LTI systems for which recent work has shown that unstable systems with a bigger spectral radius are easier to estimate. Extensive numerical experiments demonstrate the performance of our OLS estimator.
\end{abstract}

\section{Introduction}

System identification estimates the models of dynamical systems from observed input-output data~\cite{ljung1999system}, which is an important topic in  time-series analysis, control theory, robotics, and reinforcement learning. There is an extensive literature on theoretical and algorithmic developments of system identification, with many excellent textbooks~\cite{ljung1999system,van2012subspace} and surveys~\cite{aastrom1971system,pillonetto2014kernel,qin2006overview} available. Classical results often offer asymptotic convergence guarantees for learning system models from observed data~\cite{ljung1999system,qin2006overview}. There has been an
increasing interest in finite sample complexity and non-asymptotic analysis, since good error bounds are essential for designing high-performance robust control systems as well as for establishing end-to-end performance guarantees~\cite{dean2017sample,tu2017non,zhou1996robust}.

In this paper, we consider the problem of identifying a discrete-time linear time-invariant (LTI) system
\begin{equation} \label{eq:dynamics}
    \begin{aligned}
        x_{t+1} = A x_{t} + Bu_t + B_w w_t\\
        y_t     = C x_t + Du_t + D_v v_t,
    \end{aligned}
\end{equation}
 where  $x_t \in \mathbb{R}^{n}$, $y_t \in \mathbb{R}^{p}$, $u_t \in \mathbb{R}^{m}$ are the system state, output and input at time $t$, respectively, and $w_t \in \mathbb{R}^{q}, v_t \in \mathbb{R}^{l}$ denote the process and measurement noises. When $C = I, D = 0$ and $D_v = 0$, \emph{i.e.}, we have direct state observations,~\eqref{eq:dynamics} is known as a \emph{fully observed LTI system}, for which a line of recent work has derived sharp finite-time error analysis based on ordinary least-squares~(OLS) using a single trajectory~\cite{sarkar2018near,simchowitz2018learning}. Notably, despite the correlation between states $x_0, x_1, \ldots, x_T$ and the noise $w_0, w_1, \ldots, w_{T-1}$,
the analysis technique~\cite{sarkar2018near} is suitable for both~stable and unstable systems (captured by $\rho(A) < 1$, $\rho(A) \geq 1$, respectively --- the spectral radius of $A$), under a regularity condition. Furthermore, it is shown that \emph{more unstable} linear systems with a bigger $\rho(A)$ are \emph{easier} to estimate using OLS~\cite{simchowitz2018learning}. This is consistent with the result in~\cite{dean2017sample}, where only the last data sample of multiple independent trajectories is used.

When $C\neq I$, \emph{i.e.},~\eqref{eq:dynamics} is a \emph{partially observed LTI system}, OLS can also be used to recover Markov parameters which in turn allow constructing the system parameters~$A, B$, $C, D$ using the celebrated Ho-Kalman algorithm~\cite{ho1966effective}. However, the methods and analysis techniques are typically more involved than those for fully observed LTI systems. In~\cite{oymak2019non,sarkar2019finite}, OLS is used to estimate Markov parameters and the Hankel matrix using a single trajectory, respectively; but their methods require strictly stable system $\rho(A) < 1$ and non-asymptotic bounds depend on the inverse stability gap $\frac{1}{1 - \rho(A)}$. More recently, the method in~\cite{oymak2019non} has been extended using a pre-filtered variation of the OLS in~\cite{simchowitz2019learning}, working for marginally stable system $\rho(A) \leq 1$, and the finite-time estimation error therein does not degenerate as $\rho(A) \rightarrow 1$. Using outputs as regressors also allows for stochastic system identification with $\rho(A) \leq 1$~\cite{tsiamis2019finite}. In addition to OLS, projected gradient descent can recover the state-space representation~\eqref{eq:dynamics} using a polynomial number of samples~\cite{hardt2018gradient}, but this method requires an assumption stronger than strictly stability of $A$.

One key factor preventing the techniques in~\cite{oymak2019non,sarkar2019finite,simchowitz2019learning} from unstable systems is that the error state $e_t  = CA^{T-1}x_{t-T+1}$ becomes unbounded if $\rho(A) >1$, where $T$ is the length of Markov parameters. This issue is tightly related to the single-trajectory setup. Similar to~\cite{dean2017sample}, strict stability can be removed using multiple independent trajectories since $e_t$ is replaced by the initial state $x_0$ in each trajectory that is zero by~restarting the experiment.
This setup is briefly observed in~\cite[Remark 3]{oymak2019non}, and is used in recent work~\cite{sun2020finite} in which a standard nuclear norm regularization is discussed.  Note that only the last output measurement $y_{T-1}$ of each trajectory is used in the OLS estimation of~\cite{sun2020finite}, which facilitates the non-asymptotic analysis but might be very data inefficient. The setup of multiple trajectories was also used in~\cite{tu2017non} where the authors focused on stable SISO (single-input and single-output) systems and derived sharp bounds on $\mathcal{H}_{\infty}$ error between the true unknown plant and the estimated FIR (finite impulse response) approximation.

\textbf{Contributions:} We focus on the partially observed LTI system~\eqref{eq:dynamics} that can be open-loop stable or unstable. Our contributions are two aspects. First, motivated by~\cite{tu2017non}, we present a simple OLS to estimate the Markov parameters using multiple trajectories. In contrast to~\cite{oymak2019non,simchowitz2019learning,sarkar2019finite}, the setup of multiple trajectories allows for any spectral radius of $A$. Unlike~\cite{sun2020finite}, we utilize all data samples in each trajectory for the OLS estimator, thus improving the data efficiency. See Table~\ref{tab:comparision} for a comparison with recent work on LTI system identification. Second, we present a non-asymptotic analysis on the estimation error by leveraging standard concentration results of random matrices with special upper-triangular Toeplitz structures in data matrices. The OSL estimator is provably consistent for both stable and unstable systems, and the convergence rate scales as $\mathcal{O}(1/\sqrt{N})$ for estimating the Markov parameters, where $N$ is the number of trajectories. This rate is consistent with previous work~\cite{sun2020finite,oymak2019non,sarkar2019finite,simchowitz2019learning,tu2017non}; see Table~\ref{tab:comparision}.  Unlike fully observed LTI systems where recent work has proven that more unstable systems are easier to estimate~\cite{simchowitz2018learning,dean2017sample}, our theoretical bound suggests that partially observed LTI systems with bigger spectral radius $\rho(A)$ are harder to estimate when there exists process noise $w_t$. Interestingly, if the process noise is zero, {i.e.}, $w_t = 0$, our non-asymptotic bound is independent to $\rho(A)$. Numerical experiments on two marginally stable and unstable systems demonstrate the performance of our OLS estimator compared to that in~\cite{sun2020finite,simchowitz2019learning}.


\textbf{Organization:} The rest of this paper is organized as follows. We~present the problem statement and least-squares procedure in Section~\ref{section:statement}. Non-asymptotic analysis is discussed in Section~\ref{section:bound}, and numerical results are presented in Section~\ref{section:experiments}.~Section~\ref{section:conclusion} concludes the paper. Proofs are postponed to the appendix, where some auxiliary results on non-zero initial state, recovery of state-space models, and selecting the length of Markov parameters are also discussed.

\textbf{Notation:} Given a matrix $A \in \mathbb{R}^{m \times n}$, the Frobenius norm is denoted by $\|A\|_{\text{F}} = \sqrt{\text{Tr}(AA^\tr)}$, and we denote $\|A\|$ as its spectral norm, \emph{i.e.}, its largest singular value $\sigma_{\max}(A)$. $\rho(A)$ denotes the spectral radius of a square matrix $A$, and $A^\tr$ denotes the transpose of matrix $A$. 
For a symmetric matrix $B \in \mathbb{S}^{n}$, $\lambda_{\min}(B)$ denotes its minimum eigenvalue. Multivariate normal distribution with mean $\mu$ and covariance matrix $\Sigma$ is denoted by $\mathcal{N}(\mu,\Sigma)$.

 \begin{table*}[t]
 \addtolength{\leftskip} {-1cm}
 \addtolength{\leftskip} {-1cm}
 \setlength{\abovecaptionskip}{2mm}
   \setlength{\belowcaptionskip}{3mm}
\captionsetup{font=normal}
  \caption{A summary of recent non-asymptotic analysis for LTI system estimation}
  {\footnotesize
 \begin{tabular}{m{3.65cm} c c c c c c c c}
  \toprule
   \multirow{2}{*}{Paper}  & \multirow{2}{*}{Meas.} & \multirow{2}{*}{Type} & \multirow{2}{*}{Stability} & \multirow{2}{*}{Rollouts} & \multirow{2}{*}{Data} & \multirow{2}{*}{Inputs $u_t$} &  \multicolumn{2}{c}{Rate}  \\
    & & & & & & & $G$ & $A,B,C,D$ \\
    \hline
   This work & \multirow{7}{*}{Partial$^\dagger$} & MIMO & Any & Multiple & all & Gaussian & $\displaystyle \mathcal{O}\big({{N}^{-\frac{1}{2}}}\big)$ & \multirow{6}{*}{$\displaystyle \mathcal{O}\big({{N}^{-\frac{1}{2}}}\big)$}   \\
Sun \emph{et al.}, 2020 \cite{sun2020finite}    & & MIMO & Any  & Multiple & final  & Gaussian & $\displaystyle \mathcal{O}\big({{N}^{-\frac{1}{2}}}\big)$  \\
     Tu \emph{et al.}, 2017 \cite{tu2017non}$^{1}$&  &SISO & $\rho(A) < 1$  &  Multiple& all & Deterministic & $\displaystyle \mathcal{O}\big({{N}^{-\frac{1}{2}}}\big)$    \\
     Oymak \emph{et al.}, 2019 \cite{oymak2019non}  &  & MIMO & $\rho(A) < 1$  & Single & all & Gaussian & $\displaystyle \mathcal{O}\big({{N}^{-\frac{1}{2}}}\big)$  \\
      Sarkar \emph{et al.}, 2019 \cite{sarkar2019finite}  &  & MIMO & $\rho(A) < 1$  & Single & all& Gaussian & $\displaystyle \mathcal{O}\big({{N}^{-\frac{1}{2}}}\big)$  \\
    Simchowitz~\!et \!al.,~\!2019\cite{simchowitz2019learning}  &  & MIMO & $\rho(A) \leq 1$  & Single &all& Gaussian & $\displaystyle \mathcal{O}\big({{N}^{-\frac{1}{2}}}\big)$  \\
    \hline
    Dean \emph{et al.}, 2019 \cite{dean2017sample}  &  \multirow{3}{*}{Full$^\ddagger$} & MIMO & Any &  Multiple & final &Gaussian & - & $\displaystyle \mathcal{O}\big({{N}^{-\frac{1}{2}}}\big)$  \\
       Sarkar \emph{et al.}, 2018 \cite{sarkar2018near}  &  & MIMO &  Any$^2$  & Single &all& Gaussian &-& $\displaystyle \mathcal{O}\big({{N}^{-\frac{1}{2}}}\big)$  \\
   Simchowitz~{et al.},2018~\cite{simchowitz2018learning}  &  & MIMO &  $\rho(A) \leq 1$ &  Single & all& Gaussian & - & $\displaystyle \mathcal{O}\big({{N}^{-\frac{1}{2}}}\big)$  \\
    \bottomrule
  \end{tabular}
  }
  \raggedright
{\footnotesize
{\begin{spacing}{1}
$^\dagger$\;: For partially observed  LTI systems, the convergence rate is typically expressed in terms of estimating the Markov parameter $G$, scaling as $\displaystyle \mathcal{O}\big({{N}^{-\frac{1}{2}}}\big)$; The refined robustness result of Ho-Kalman algorithm in~\cite{sarkar2018near} (see also~\cite{tsiamis2019finite,oymak2019non}) suggests that the convergence rate for estimating $A, B, C, D$ is $\displaystyle \mathcal{O}\big({{N}^{-\frac{1}{2}}}\big)$.
For robust controller design, we might not need to estimate $A, B, C, D$ explicitly; see Appendix~\ref{appendix:Ho-kalman} for further discussion. \newline
$^\ddagger$\;: For fully observed LTI systems, the rate is expressed in terms of state space matrix estimation $A, B$; \newline
$^{1}$\;: Their method can be used for unstable SISO systems, but their results on $\mathcal{H}_\infty$ error bounds are specialized for open-loop stable SISO systems.\newline
$^2$\;: The result requires a regularity condition on eigenvalues of $A$~\cite[Section 5]{sarkar2018near}; also see~\cite[Theorem 1]{sarkar2018near} for detailed scaling of convergence.
\end{spacing}
}
}
\label{tab:comparision}
\end{table*}

\section{Problem Statement} \label{section:statement}

We consider the MIMO (multiple-input and multiple-output) LTI system~\eqref{eq:dynamics}. It is assumed that the process and measurement noises are i.i.d. Gaussian, \emph{i.e.}, $w_t \sim\mathcal{N}(0,\sigma_w^2I)$ and $v_t \sim\mathcal{N}(0,\sigma_v^2I)$. Given a horizon $T$, we aim 1) to learn the first $T$ Markov parameters of the system
\begin{equation} \label{eq:MarkovParameter}
    G  = \begin{bmatrix} D &  CB & CAB & \ldots & CA^{T-2}B \end{bmatrix} \in \mathbb{R}^{p \times mT},
\end{equation}
 and 2) to provide a finite sample bound on the estimation accuracy. Together with the classical Ho-Kalman algorithm~\cite{ho1966effective} and recent robustness results in~\cite{oymak2019non,sarkar2019finite}, a consistent estimate of $G$ can return a consistent estimation of system matrices $({A}, {B}, {C}, {D})$ up to a similarity transformation. Note that the matrices  $B_w$ and $D_v$ in~\eqref{eq:dynamics} are unknown but not estimated. 

\subsection{Data collection via multiple rollouts}
By running experiments in which the system~\eqref{eq:dynamics} starts at $x_0 = 0$\footnote{We note that $x_0$ can be random with zero mean and bounded covariance, for which our estimation procedure stays the same and the analysis can be easily carried over; see Appendix~\ref{appendix:initialstate} for details. In addition, the length of Markov parameters and the rollout length can be different; see Appendix~\ref{appendix:rollout_length}.}
and the dynamics evolve with a given input $u_t$, we record the resulting output measurements $y_t$. We call the input/output trajectory $(y_t, u_t)$ from such experiment as a \emph{rollout}. To identify the Markov parameter~\eqref{eq:MarkovParameter}, we excite the system with Gaussian noise $u_t \sim \mathcal{N}(0,\sigma_u^2 I)$ for $N$ rollouts, each of length $T$. The resulting dataset is
$$
    \left\{\left(y_t^{(i)},u_t^{(i)}\right) : 1 \leq i \leq N, \; 0 \leq t \leq T-1\right\},
$$
where $t$ indexes the time in each rollout and $i$ denotes each independent rollout.

This multi-rollout data collection procedure is motivated by~\cite{tu2017non}, where the authors focused on open-loop stable~SISO systems with optimal deterministic input design. This multi-rollout setup is also adopted in recent work~\cite{sun2020finite}, where only the last output measurement $y_{T-1}^{(i)}$ is recorded in each rollout. A similar multi-rollout setup is used  for estimating fully observed LTI systems in~\cite{dean2017sample}. Another widely-used data collection procedure is based on a \emph{single rollout}~\cite{simchowitz2018learning,simchowitz2019learning,oymak2019non,sarkar2019finite,sarkar2018near}, where we apply an input sequence from $0$ to $N \times T -1$, without restart, and collect all the outputs. 
Table~\ref{tab:comparision} lists a summary of recent non-asymptotic analysis of LTI system identification in different setups.

\subsection{Least-squares procedure}

To ease notation, for each rollout $i$, we organize the input/output data as
\begin{equation}
    \begin{aligned}
    y^{(i)} &= \begin{bmatrix}y^{(i)}_0 & y^{(i)}_1 & \ldots & y^{(i)}_{T-1} \end{bmatrix} \in \mathbb{R}^{p \times T}, \\
        u^{(i)} &= \begin{bmatrix}u^{(i)}_0 & u^{(i)}_1 & \ldots & u^{(i)}_{T-1} \end{bmatrix} \in \mathbb{R}^{m \times T},
\end{aligned}
\end{equation}
and the process/measurement noise  as
\begin{equation}
    \begin{aligned}
    w^{(i)} &= \begin{bmatrix}w^{(i)}_0 & w^{(i)}_1 & \ldots & w^{(i)}_{T-1} \end{bmatrix} \in \mathbb{R}^{q \times T}, \\
        v^{(i)} &= \begin{bmatrix}v^{(i)}_0 & v^{(i)}_1 & \ldots & v^{(i)}_{T-1} \end{bmatrix} \in \mathbb{R}^{l \times T}.
\end{aligned}
\end{equation}
To establish an explicit connection with Markov parameters, each measurement $y_t, t = 0, \ldots, T-1$ can be expanded recursively as (where we used the fact $x_0^{(i)} = 0$)
\begin{equation} \label{eq:output_t}
    \begin{aligned}
        y^{(i)}_{t} &= Cx^{(i)}_{t} + Du^{(i)}_{t} + D_v v_{t} \\
        &= \sum_{k=0}^{t-1}CA^k\left(Bu^{(i)}_{t-k-1} + B_w w^{(i)}_{t-k-1}\right) + Du^{(i)}_{t} + D_v v^{(i)}_{t}. \\
    \end{aligned}
\end{equation}
Upon defining two upper-triangular Toeplitz matrices corresponding to $u^{(i)}$ and  $w^{(i)}$, respectively,
\begin{equation} \label{eq:U&W}
\begin{aligned}
    U^{(i)} &= \begin{bmatrix}
        u_0^{(i)} & u_1^{(i)} & u_2^{(i)} & \ldots & u^{(i)}_{T-1} \\
        0 & u_0^{(i)} & u_1^{(i)} & \ldots & u^{(i)}_{T-2}  \\
         0 & 0 & u_0^{(i)} & \ldots & u^{(i)}_{T-3} \\
        \vdots & \vdots & \vdots & \ddots & \vdots \\
         0 & 0 & 0 & \ldots & u^{(i)}_{0} \\
        \end{bmatrix}  \in \mathbb{R}^{mT \times T}, \\
        W^{(i)} &= \begin{bmatrix}
        w_0^{(i)} & w_1^{(i)} & w_2^{(i)} & \ldots & w^{(i)}_{T-1} \\
        0 & w_0^{(i)} & w_1^{(i)} & \ldots & w^{(i)}_{T-2}  \\
         0 & 0 & w_0^{(i)} & \ldots & w^{(i)}_{T-3} \\
        \vdots & \vdots & \vdots & \ddots & \vdots \\
         0 & 0 & 0 & \ldots & w^{(i)}_{0}  \\
        \end{bmatrix} \in \mathbb{R}^{qT \times T},
\end{aligned}
\end{equation}
and $ F = \begin{bmatrix} 0 &  CB_w & CAB_w & \ldots & CA^{T-2}B_w \end{bmatrix} \in \mathbb{R}^{p \times qT}$,
the measurement data~\eqref{eq:output_t} in each rollout $i$ can be compactly written as
\begin{equation} \label{eq:outputi}
    y^{(i)} =  GU^{(i)} + FW^{(i)} + D_vv^{(i)}.
\end{equation}
%
%
Note that the noise terms $W^{(i)}$ and $v^{(i)}$ are zero-mean. We form the OLS problem as
\begin{equation} \label{eq:leastsquares}
    \hat{G} = \arg\min_{X \in \mathbb{R}^{p \times mT}} \sum_{i=1}^N \|y^{(i)}- X U^{(i)}\|^2_{\text{F}}.
\end{equation}
Defining our label matrix $Y$ and input data matrix $U$ as
$$
\begin{aligned}
 Y &= \begin{bmatrix} y^{(1)} & \ldots& y^{(N)} \end{bmatrix} \in \mathbb{R}^{p \times NT },\\
 U &= \begin{bmatrix} U^{(1)}& \ldots& U^{(N)}  \end{bmatrix} \in \mathbb{R}^{mT \times NT},
\end{aligned}
$$
the OLS becomes
$$
    \min_{X \in \mathbb{R}^{p \times mT}} \quad \|Y - XU\|_{\text{F}}^2.
$$
Hence, the least square solution is $\hat{G} = YU^\dagger$
where $U^\dagger = U^\tr(UU^\tr)^{-1}$ is the right pseudo-inverse of $U$. Ideally, we would like the estimation error $\|G - \hat{G}\|$ to be small. Our main result bounds this error as a function of sample size $N$, length $T$, and noise levels $\sigma_w, \sigma_v$, and system parameters.
\vspace{-1mm}
\begin{remark}[Open-loop stability] \label{remark:stability}
For any fixed $T$, the noise terms $FW^{(i)}$, $D_vv^{(i)}$ in~\eqref{eq:outputi} are zero mean with finite covariance, irrespective of $\rho(A) < 1$ or $\rho(A) \geq 1$.
As shown  in Section~\ref{section:bound}, the OLS~\eqref{eq:leastsquares} is suitable for estimating both open-loop stable and unstable systems.
For the single-rollout setup in~\cite{oymak2019non,simchowitz2019learning,sarkar2019finite}, we need to generate $N$ subsequences of length $T$, each of which evolves as
$
    y_t = G \bar{u}_t + F\bar{w}_t + D_vv_t + e_t,
$
with $\bar{u}_t = \begin{bmatrix} u_t^\tr & u_{t-1}^\tr & \ldots & u_{t - T + 1}^\tr  \end{bmatrix}^\tr \in \mathbb{R}^{mT}, \bar{w}_t = \begin{bmatrix} w_t^\tr & w_{t-1}^\tr & \ldots & w_{t - T + 1}^\tr  \end{bmatrix}^\tr \in \mathbb{R}^{qT},$
where the extra term $e_t = CA^{T-1}x_{t-T+1}$ is due to the state at time $t-T+1$. The requirement of bounding this term makes the methods of~\cite{oymak2019non,sarkar2019finite} only suitable for strictly stable systems. 
Unfortunately, as pointed out in~\cite{simchowitz2019learning}, the condition $\rho(A) <1$ is quite restrictive, because simple oscillators and integrator from Newton's law yield systems with $\rho(A) = 1$. For example, the matrix $
 A = \begin{bmatrix} 1 & \Delta \\ 0 & 1 \end{bmatrix}
 $,
 corresponding to a discretization of Newton's second law, violates the strict stability. More recently, the authors in~\cite{simchowitz2019learning} have extended~\cite{oymak2019non} for marginally stable systems $\rho(A) \leq 1$ using a prefiltered OLS; see Table~\ref{tab:comparision} for their comparison.
\end{remark}

\section{Results on Learning Markov Parameters} \label{section:bound}

In this section, we work out the statistical rate for the OLS estimator~\eqref{eq:leastsquares} which uses all data samples of each trajectory. 
Since $W^{(i)}$ and $U^{(i)}$ are independent, and $v^{(i)}$ and $U^{(i)}$ are also independent, our analysis is simpler than that in~\cite{oymak2019non}. Similar to~\cite{dean2017sample}, our analysis ideas leverage  standard non-asymptotic analysis of random matrices with exploitation of the special Toeplitz structures of $W^{(i)}$ and $U^{(i)}$ in~\eqref{eq:U&W}.

\subsection{Theoretical bound on least-squares error} \label{subsection:mainresult}
From~\eqref{eq:outputi}, the systems of outputs are
$$
    Y = GU + FW + D_vv,
$$
where $W =  \begin{bmatrix} W^{(1)}, \; \ldots, \; W^{(N)} \end{bmatrix} \in \mathbb{R}^{qT \times NT },
v = \begin{bmatrix} v^{(1)}, \; \ldots, \; v^{(N)} \end{bmatrix} \in \mathbb{R}^{l \times NT}.$
%
Following the OLS~\eqref{eq:leastsquares}, the estimation error is given by
\begin{equation} \label{eq:error}
\begin{aligned}
    \hat{G} - G &= YU^\tr(UU^\tr)^{-1} - G  \\
                & = (Y - GU)U^\tr(UU^\tr)^{-1} \\
                & = (FW+D_vv)U^\tr(UU^\tr)^{-1}.
\end{aligned}
\end{equation}
Our main theorem quantifies the estimation error that captures the problem dependencies in term of noise levels $\sigma_w$, $\sigma_v$, system parameters {$F$, $D_v$}, and the dimension $m+q+l$.
\begin{theorem} \label{theo:mainresult}
 Fix any $0<\delta <1$. If the number of rollouts satisfies $N \geq 8mT + 4(m+q+l+4)\log(3T/\delta)$, we have with probability at least $1 - \delta$
    \begin{equation} \label{eq:MainTheorem}
        \|\hat{G} - G\| \leq \frac{\sigma_v C_1 + \sigma_wC_2}{\sigma_u}\sqrt{\frac{1}{N}},
    \end{equation}
    where
    $$
    \begin{aligned}
       C_1 &=  8\|D_v\|\sqrt{2T(T+1)(m+l)\log(27T/\delta)},  \\
       C_2 &= 16\|F\|\sqrt{\left(\frac{T^3}{3} + \frac{T^2}{2} + \frac{T}{6}\right)2(m+q)\log(27T/\delta)}.
    \end{aligned}
    \vspace{1mm}
    $$
\end{theorem}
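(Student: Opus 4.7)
The plan is to start from the residual identity~\eqref{eq:error} and apply the sub-multiplicative bound
$$\|\hat G - G\| \;\le\; \frac{\|FWU^\tr\| + \|D_v v U^\tr\|}{\lambda_{\min}(UU^\tr)},$$
so that the theorem follows from three independent high-probability estimates combined by a union bound: a lower bound on $\lambda_{\min}(UU^\tr)$ and upper bounds on the two numerators.

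For the denominator I decompose $UU^\tr = \sum_{i=1}^N U^{(i)}(U^{(i)})^\tr$. The Toeplitz structure of $U^{(i)}$ combined with the Gaussianity of the inputs gives the block-diagonal expectation
$$\mathbb E[U^{(i)}(U^{(i)})^\tr] = \sigma_u^2\,\mathrm{blkdiag}\bigl((T-k+1)I_m\bigr)_{k=1}^T,$$
with smallest eigenvalue $\sigma_u^2$ and largest $T\sigma_u^2$. A matrix Chernoff-type bound for this sum of i.i.d.\ PSD matrices yields $\tfrac12 N\sigma_u^2 \le \lambda_{\min}(UU^\tr)$ and $\lambda_{\max}(UU^\tr) \le 2NT\sigma_u^2$ with probability $\ge 1-\delta/3$; the sample-size condition of the theorem is precisely the threshold required here. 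The measurement-noise term is handled by conditioning on $U$: each row of $vU^\tr$ is a Gaussian vector with covariance $\sigma_v^2 UU^\tr$ and the $l$ rows are independent, so an $\varepsilon$-net argument on the unit spheres of $\mathbb R^{mT}$ and $\mathbb R^l$ combined with Gaussian concentration gives $\|D_v vU^\tr\|$ of the order claimed for $C_1$ after substituting $\lambda_{\max}(UU^\tr)\le 2NT\sigma_u^2$.

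For the process-noise term I expand a generic bilinear form as
$$x^\tr FWU^\tr y \;=\; \sum_{i,t}(h^{(i)}_t)^\tr w^{(i)}_t, \qquad h^{(i)}_t := \sum_{j=t+1}^T\bigl((U^{(i)})^\tr y\bigr)_j\,F_{j-t}^\tr x,$$
which, conditionally on $U$, is a centered Gaussian with variance $\sigma_w^2\sum_{i,t}\|h^{(i)}_t\|^2$. Applying Cauchy-Schwarz to $\|h^{(i)}_t\|^2$, interchanging the $t$- and $j$-summations, and bounding the weighted Gram matrix $\sum_i U^{(i)}\mathrm{diag}(c)(U^{(i)})^\tr$ with $c_j := j(2T-j+1)/2$ via its expectation collapse the variance to $\sigma_w^2\|F\|^2 N\sigma_u^2(T^3/3+T^2/2+T/6)$ up to constants: the combinatorial factor $\sum_{k=1}^T k^2 = T^3/3+T^2/2+T/6$ emerges as the largest diagonal block of $\mathbb E[U^{(i)}\mathrm{diag}(c)(U^{(i)})^\tr]$. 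A second $\varepsilon$-net/union-bound step then closes the estimate, and substituting the three high-probability bounds into the opening display with $\lambda_{\min}(UU^\tr)\ge N\sigma_u^2/2$ yields the theorem.

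The main obstacle is the process-noise step. A naive factorization $\|F\|\,\|W\|\,\|U^\dagger\|$ would give an $N$-independent rate of order $\|F\|\sigma_w\sqrt T/\sigma_u$ and lose the crucial $1/\sqrt N$ scaling altogether, since $\|W\|\sim\sigma_w\sqrt{NT}$ and $\|U^\dagger\|\sim 1/(\sigma_u\sqrt N)$ cancel the rollout averaging. The remedy is to keep $FWU^\tr$ as a single random bilinear form in the Gaussian samples $w^{(i)}_t$, propagate the Toeplitz multiplicity (each $w^{(i)}_t$ enters columns $t+1,\dots,T$ of $W^{(i)}$) through the conditional variance, and recognize that the induced weighted PSD sum has maximum eigenvalue $\sigma_u^2\sum_{k=1}^T k^2$. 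This combinatorial $T^3$ factor is exactly what produces $C_2$.
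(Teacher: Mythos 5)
Your overall decomposition is the same as the paper's: bound $\lambda_{\min}(UU^\tr)$ from below and $\|FWU^\tr\|$, $\|D_vvU^\tr\|$ from above, then union bound. The paper, however, never needs any upper bound on $\lambda_{\max}(UU^\tr)$ or on a weighted Gram matrix: it lower-bounds $UU^\tr$ by dropping all but the last Toeplitz column block, $UU^\tr \succeq \sum_i \hat u^{(i)}_{T-1}(\hat u^{(i)}_{T-1})^\tr$, which is a genuine Wishart matrix handled by Lemma~\ref{lemma:singularvalue}; and it bounds $\|vU^\tr\|$, $\|WU^\tr\|$ block by block with Lemma~\ref{Lemma:spectralnorm} and then assembles the blocks with Lemma~\ref{lemma:blockmatrix} and a Frobenius-norm computation (which is where $T(T+1)/2$ and $\sum_{k=1}^T k^2 = T^3/3+T^2/2+T/6$ come from). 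Your derivation of the same $\sum_{k=1}^T k^2$ factor through the weights $c_j=j(2T-j+1)/2$ and the expectation of $\sum_i U^{(i)}\mathrm{diag}(c)(U^{(i)})^\tr$ is a correct and genuinely different route to that combinatorial constant.

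There are two concrete gaps. First, the concentration steps you invoke are not available as stated: a matrix Chernoff bound requires almost surely bounded summands, while $U^{(i)}(U^{(i)})^\tr$ (and the weighted Gram matrix) are built from unbounded Gaussians with strong internal dependence from the Toeplitz structure; you would need a sub-exponential matrix Bernstein or truncation argument for the three spectral claims ($\lambda_{\min}(UU^\tr)\geq N\sigma_u^2/2$, $\lambda_{\max}(UU^\tr)\leq 2NT\sigma_u^2$, and the weighted Gram bound), none of which is supplied, and the assertion that the theorem's sample-size condition "is precisely the threshold required" is unsupported. Second, and more substantively, your process-noise step cannot reach the stated constant $C_2$: after the conditional-variance bound $\sigma_w^2\|F\|^2 N\sigma_u^2(T^3/3+T^2/2+T/6)$, the $\varepsilon$-net over $S^{p-1}\times S^{mT-1}$ forces a dimension factor of order $\sqrt{p+mT+\log(1/\delta)}$, whereas $C_2$ contains only $\sqrt{2(m+q)\log(27T/\delta)}$; your bound is therefore weaker by roughly $\sqrt{mT/((m+q)\log T)}$, i.e., an extra $\sqrt{T}$-type factor, so it does not prove the inequality \eqref{eq:MainTheorem} as claimed. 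Recovering the $(m+q)\log(T/\delta)$ factor requires exploiting the block structure of $WU^\tr$ (each $q\times m$ block is a sum of outer products of independent Gaussian vectors, to which Lemma~\ref{Lemma:spectralnorm} applies, with a union bound over only $T^2$ blocks), which is exactly the step your global bilinear-form argument bypasses. The measurement-noise term is less problematic — conditioning on $U$ and using Gaussian rows with covariance $\sigma_v^2UU^\tr$ can match the order of $C_1$ provided the $\lambda_{\max}(UU^\tr)$ bound is actually established — but as written both noise estimates rest on unproven spectral controls that the paper's proof deliberately avoids.
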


Although this theorem is not hard to prove, we have a few interesting implications. First, the bound~\eqref{eq:MainTheorem} states that the estimation error $\|\hat{G} - G\|$ behaviors as $\mathcal{O}(\frac{1}{\sqrt{N}})$, which is consistent with previous non-asymptotic results (they may scale differently with respect to other system quantities); see Table~\ref{tab:comparision}.
Second, our bound individually accounts for the process noise $w_t$ and measurement noise $v_t$, while the results in~\cite{sun2020finite} assume no process noise $w_t = 0$. In addition, we have some further observations below.  

\begin{itemize}[align=left] 
\item \textit{Stable vs. unstable systems.} Theorem~\ref{theo:mainresult} confirms that the OLS~\eqref{eq:leastsquares} returns a consistent estimation of $G$ for both stable and unstable systems, while results in~\cite{oymak2019non,sarkar2019finite,simchowitz2019learning} only work for stable systems due to their single-rollout setup, as discussed in Remark~\ref{remark:stability}. Recent results show that when states are directly observed, unstable systems with bigger $\rho(A)$ are easier the estimate in both multi-rollout~\cite{dean2017sample} and single-rollout~\cite{simchowitz2019learning} setups. However, this statement might not be true for partially observed LTI systems. In particular, the constant $\|F\|$ will grow exponentially when $\rho(A) >1$ in Theorem~\ref{theo:mainresult}, suggesting that more trajectories are needed for unstable systems to achieve the same accuracy. This is expected since the process noise $w_t$ will be amplified during the system evolution for unstable systems. Interestingly, when the process noise $w_t = 0$, the bound in Theorem~\ref{theo:mainresult} is independent to  $\rho(A)$. 

    \item \textit{Dependency of system dimensions.} The bound~\eqref{eq:MainTheorem} is not tight with respect to system dimensions. First, there are $pmT$ parameters in $G$ to learn, and our bound states that we need $\mathcal{O}(mT)$ trajectories (which is also required in~\cite{sun2020finite}), each of which provides $pT$ measurements. There is a redundancy of factor $T$. Second, the constants $C_1$ and $C_2$ in Theorem~\ref{theo:mainresult} depends polynomially on the length $T$ of $G$, suggesting more trajectories are needed to estimate longer Markov Parameters. However, our numerical results (see Fig.~\ref{fig:unstable_varyingT}) suggest that the normalized estimation error seems unrelated to the length $T$ after utilizing all data samples in~\eqref{eq:leastsquares}. It is interesting to derive tighter bounds in future work.
    To estimate $A, B, C, D$ using the Ho-Kalman algorithm~\cite{ho1966effective}, the value of $T$ should be big enough to satisfy a rank condition on the Hankel matrix; see Appendix~\ref{appendix:lengthT} for further discussion on selecting $T$. 
\end{itemize}



Finally, using the celebrated Ho-Kalman algorithm~\cite{ho1966effective} and recent robustness analysis in\cite{oymak2019non,sarkar2018near,tsiamis2019finite}, we can obtain consistent state-space representation $\hat{A}, \hat{B}, \hat{C}, \hat{D}$ up to a similarity transformation. We summarize this observation in the following corollary; 
See Appendix~\ref{appendix:Ho-kalman} for details on the Ho-Kalman algorithm.

\begin{corollary}[Recovery of System Parameters]  \label{corollary:ABCD}
Suppose that $(A, B, C, D)$ in~\eqref{eq:dynamics} is controllable and observable. Then, for $N$ and $T$ sufficiently large, there exists a unitary matrix $S$, and a constant $C_3$ depending on system parameters $(A, B, C, D)$, the dimension $(n, m, p, q, l, T)$ and $\sigma_u, \sigma_w, \sigma_v$, and logarithmic factor of $N$ such that we have with high probability
    \begin{equation*} 
        \max\{ \|\hat{A} - SAS^\tr\|, \|\hat{B} - SB\|, \|\hat{C} - CS^\tr\|, \|\hat{D} - D\| \} \leq \frac{C_3 }{N^\frac{1}{2}},
    \end{equation*}
    where $(\hat{A}, \hat{B}, \hat{C}, \hat{D})$ is the output of the Ho-Kalman algorithm on the OLS estimation $\hat{G}$ from~\eqref{eq:leastsquares}.
\end{corollary}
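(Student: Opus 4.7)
The proof is essentially a composition of two ingredients: the Markov parameter estimation bound from Theorem~\ref{theo:mainresult}, and the perturbation analysis of the Ho-Kalman algorithm developed in \cite{oymak2019non,sarkar2018near,tsiamis2019finite}. The plan is to translate the bound on $\|\hat G-G\|$ into a bound on the corresponding Hankel matrix, then invoke the standard robustness guarantee to push the error through to $(\hat A,\hat B,\hat C,\hat D)$.

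First I would fix a target confidence $\delta$ and apply Theorem~\ref{theo:mainresult}: for $N\geq 8mT+4(m+q+l+4)\log(3T/\delta)$, with probability at least $1-\delta$,
\[
\|\hat G-G\|\;\leq\;\varepsilon_N\;:=\;\frac{\sigma_v C_1+\sigma_w C_2}{\sigma_u\sqrt{N}}.
\]
Next, split the Markov parameter $\hat G$ as $\hat G=[\hat D\,|\,\hat C\hat B\,|\,\hat C\hat A\hat B\,|\,\ldots]$ (so that in particular $\|\hat D-D\|\leq\|\hat G-G\|$), and form from the remaining blocks the Hankel matrix $\hat H\in\mathbb{R}^{pT_1\times mT_2}$ of dimensions $T_1+T_2+1=T$ with $T_1,T_2\geq n$. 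Since each block of $\hat H-H$ is a sub-matrix of $\hat G-G$, a standard counting argument (as in \cite[Lem.~5.4]{oymak2019non}) gives $\|\hat H-H\|\leq\sqrt{\min(T_1,T_2)}\,\|\hat G-G\|\leq\sqrt{T}\,\varepsilon_N$.

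Then I would invoke the robustness of the Ho-Kalman procedure. Controllability and observability of $(A,B,C,D)$, together with $T_1,T_2\geq n$, ensure that the true Hankel matrix $H$ has rank exactly $n$, so $\sigma_n(H)>0$ is a system-dependent positive constant. The Ho-Kalman perturbation lemma \cite[Thm.~5.3]{oymak2019non} (see also \cite{sarkar2018near,tsiamis2019finite}) states that if $\|\hat H-H\|\leq \sigma_n(H)/2$, then there exists a unitary matrix $S\in\mathbb{R}^{n\times n}$ such that
\[
\max\{\|\hat A-SAS^\tr\|,\|\hat B-SB\|,\|\hat C-CS^\tr\|\}\;\leq\;\kappa(A,B,C,T)\,\|\hat H-H\|,
\]
where $\kappa$ depends polynomially on $1/\sigma_n(H)$, $\|H\|$, and the system dimensions. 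Combining with the $\hat D$ bound above and with the Hankel bound yields
\[
\max\{\|\hat A-SAS^\tr\|,\|\hat B-SB\|,\|\hat C-CS^\tr\|,\|\hat D-D\|\}\;\leq\;C_3\,N^{-1/2}
\]
for a constant $C_3$ that absorbs $\kappa$, the factor $\sqrt{T}$, the noise levels, and the system parameters. By taking $N$ large enough that $\sqrt{T}\,\varepsilon_N\leq\sigma_n(H)/2$, the hypothesis of the Ho-Kalman robustness lemma is met on the same high-probability event, completing the proof.

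The main obstacle, and the step I would be most careful about, is the verification that the perturbation hypothesis $\|\hat H-H\|<\sigma_n(H)/2$ is satisfied — this is what forces ``$N$ sufficiently large'' in the statement, and it is where the system-dependent constants enter multiplicatively. Everything else is essentially a black-box invocation of the existing Ho-Kalman perturbation results, plus the trivial bookkeeping step that the Hankel norm is controlled by the Markov parameter norm up to a factor of $\sqrt{T}$.
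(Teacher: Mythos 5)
Your overall architecture matches the paper's proof exactly: bound $\|\hat G - G\|$ by Theorem~\ref{theo:mainresult}, transfer it to the Hankel matrix via $\|\mathcal{H}-\hat{\mathcal{H}}\| \leq \sqrt{\min\{T_1,T_2+1\}}\,\|G-\hat G\|$, check the smallness condition relative to $\sigma_{\min}(\mathcal{H}^-)$ so the Ho-Kalman robustness result applies (this is indeed what forces ``$N$ sufficiently large''), and handle $\hat D$ trivially since $\|\hat D - D\| \leq \|\hat G - G\|$. However, there is a genuine problem in the one step that actually determines the exponent in the corollary: you state the perturbation guarantee of \cite{oymak2019non} as being \emph{linear} in $\|\hat{\mathcal{H}}-\mathcal{H}\|$, i.e.\ $\max\{\|\hat A - SAS^\tr\|,\|\hat B - SB\|,\|\hat C - CS^\tr\|\}\leq \kappa\,\|\hat{\mathcal{H}}-\mathcal{H}\|$. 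The result in \cite{oymak2019non} (Corollary 5.4 there, reproduced as Proposition~\ref{corollary:hokalman} in the appendix) actually scales as $\sqrt{n\,\|\mathcal{H}-\hat{\mathcal{H}}\|}$ for $\hat B,\hat C$ and as $\sqrt{n\,\|\mathcal{H}-\hat{\mathcal{H}}\|}\,\|\mathcal{H}\|/\sigma_{\min}^{3/2}(\mathcal{H}^-)$ for $\hat A$. Plugging $\|\hat{\mathcal{H}}-\mathcal{H}\| = \mathcal{O}(N^{-1/2})$ into that square-root bound yields only an $\mathcal{O}(N^{-1/4})$ rate, not the $\mathcal{O}(N^{-1/2})$ claimed in the corollary.

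The paper is explicit about exactly this distinction: combining Theorem~\ref{theo:mainresult} with the Ho-Kalman robustness of \cite{oymak2019non} gives $\mathcal{O}(N^{-1/4})$, and the claimed $\mathcal{O}(N^{-1/2})$ rate requires the \emph{refined} analysis of the Ho-Kalman algorithm in \cite{sarkar2018near}, which gives a bound of the form $\max\{\|\hat A - SAS^\tr\|,\|\hat B - SB\|,\|\hat C - CS^\tr\|\}\leq \hat C\,\|G-\hat G\|$, i.e.\ genuinely linear in the estimation error. You cite \cite{sarkar2018near} only parenthetically while attributing the linear bound to the wrong source; as written, if one substitutes the actual statement of the lemma you invoke, your argument proves a weaker rate than the corollary asserts. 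The fix is simply to replace the black box: invoke the refined perturbation bound of \cite{sarkar2018near} (still under the smallness condition on $\|\hat{\mathcal{H}}-\mathcal{H}\|$, which your ``$N$ sufficiently large'' step already guarantees), after which the rest of your bookkeeping goes through and delivers the stated $C_3\,N^{-1/2}$ bound.
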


\subsection{Proof sketch of Theorem~\ref{theo:mainresult}} \label{section:proofsketch}

The basic proof idea is standard and similar to~\cite{dean2017sample,oymak2019non,sun2020finite}. The spectral norm of the estimation error~\eqref{eq:error} can be bounded as
\begin{equation*}
    \|\hat{G} - G\| \leq \left\|(UU^\tr)^{-1}\right\|\left(\|F\|\left\|WU^\tr\right\| + \|D_v\|\left\|vU^\tr\right\|\right).
\end{equation*}
Each  term of the right-hand side will be bounded individually. In particular, we can show that with high probability
$
        \left\|(UU^\tr)^{-1}\right\|$ behaves as  $\mathcal{O}\left(\frac{1}{N}\right)$,
     $   \left\|WU^\tr\right\|$ behaves as $ \mathcal{O}(\sqrt{N})$, and
     $   \left\|vU^\tr\right\|$ behaves as $ \mathcal{O}(\sqrt{N})$.
Combining these terms leads to~\eqref{eq:MainTheorem}.
Our proof relies on two standard lemmas in concentration analysis of random matrices. The first lemma bounds the spectral norm of the product of two Gaussian matrices~\cite[Lemma 1]{dean2017sample}.
\begin{lemma} \label{Lemma:spectralnorm}
    Fix a $\delta \in (0,1)$ and $N \geq 2(m + n) \log(1/\delta)$. Let $f_k \in \mathbb{R}^m, g_k \in \mathbb{R}^n$ be independent random vectors $f_k \sim \mathcal{N}(0, \sigma_f^2I_n), g_k \sim \mathcal{N}(0, \sigma_g^2I_m), k = 1, \ldots N$. With probability at least $1 - \delta$,
    $$
        \left\|\sum_{k=1}^N f_k g_k^\tr\right\| \leq 4\sigma_f\sigma_g \sqrt{N(m + n)\log(9/\delta)}.
    \vspace{2mm}
    $$
\end{lemma}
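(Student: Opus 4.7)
The plan is to reduce the matrix spectral-norm bound to a scalar concentration question via a variational characterization, and then pay the usual $\epsilon$-net price. Writing $M := \sum_{k=1}^{N} f_k g_k^\tr$, we have $\|M\| = \sup_{(u,v) \in S^{m-1}\times S^{n-1}} u^\tr M v = \sup_{u,v} \sum_{k=1}^{N} (u^\tr f_k)(g_k^\tr v)$. For a fixed pair $(u,v)$ of unit vectors, set $a_k = u^\tr f_k \sim \mathcal{N}(0,\sigma_f^2)$ and $b_k = g_k^\tr v \sim \mathcal{N}(0,\sigma_g^2)$. Since $f_k \perp g_k$ and the pairs are independent across $k$, the sum $Z := \sum_{k=1}^{N} a_k b_k$ is a sum of $N$ i.i.d.\ mean-zero random variables, each a product of two independent Gaussians.

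Next, each summand $a_k b_k$ is sub-exponential with Orlicz norm of order $\sigma_f \sigma_g$ and variance $\sigma_f^2 \sigma_g^2$. Bernstein's inequality therefore gives an absolute constant $c>0$ such that
$$
\mathbb{P}(|Z| \geq t) \;\leq\; 2\exp\!\left(-c\min\!\left(\frac{t^2}{N\sigma_f^2\sigma_g^2},\,\frac{t}{\sigma_f\sigma_g}\right)\right).
$$
Choosing $t = C\sigma_f\sigma_g\sqrt{Ns}$ with $s$ of order $(m+n)\log(1/\delta)$ and $C$ sufficiently large puts us inside the sub-Gaussian regime of this bound (i.e.\ $t^2/(N\sigma_f^2\sigma_g^2) \leq t/(\sigma_f\sigma_g)$) precisely because of the hypothesis $N \geq 2(m+n)\log(1/\delta)$, and yields the single-pair tail $\mathbb{P}(|Z| > t) \leq 2e^{-c' s}$.

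The final step is a standard $\epsilon$-net argument. A $(1/4)$-net $\mathcal{N}_m$ of $S^{m-1}$ has cardinality at most $9^m$, and likewise $|\mathcal{N}_n| \leq 9^n$, giving a product net of size at most $9^{m+n}$. A routine triangle-inequality computation shows $\|M\| \leq 2\max_{(\tilde u,\tilde v)\in \mathcal{N}_m\times \mathcal{N}_n} \tilde u^\tr M\tilde v$. Taking $s$ of the form $4(m+n)\log(9/\delta)$ and union-bounding the single-pair Bernstein tail over the product net absorbs the $9^{m+n}$ factor into the exponent, which delivers the advertised bound $\|M\| \leq 4\sigma_f\sigma_g\sqrt{N(m+n)\log(9/\delta)}$ with probability at least $1-\delta$ after tuning the absolute constants.

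The main technical obstacle is bookkeeping: one must track the constants carefully enough that the sub-Gaussian branch of Bernstein's inequality is genuinely active at the chosen deviation level, which is exactly what the sample-size condition $N \gtrsim (m+n)\log(1/\delta)$ enforces; otherwise one inherits the slower linear-in-$N$ sub-exponential rate instead of the desired $\sqrt{N}$ scaling. A tempting shortcut that should be \emph{avoided} is the crude submultiplicative bound $\|FG^\tr\| \leq \|F\|\,\|G\|$ applied to the Gaussian matrices $F=[f_1,\ldots,f_N]$ and $G=[g_1,\ldots,g_N]$: this fails to use the mean-zero cancellation across $k$ and produces a bound of order $\sigma_f\sigma_g N$, which is far too loose. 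Apart from this caveat, the argument is routine.
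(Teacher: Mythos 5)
You should first note that the paper itself contains no proof of this lemma: it is imported verbatim from \cite[Lemma~1]{dean2017sample}, so the only meaningful comparison is with the standard proof in that reference, whose architecture your outline essentially reproduces (variational characterization of the spectral norm, scalar concentration for $\sum_k (u^\tr f_k)(v^\tr g_k)$ at a fixed pair of unit vectors, then a $1/4$-net of cardinality $9^{m+n}$ and a union bound). At that structural level your argument is sound, and your warning against the crude bound $\|F\|\,\|G\|$ is well taken.

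The gap is at the level of constants, which is not cosmetic here because the lemma asserts the explicit factor $4$ and the explicit $\log(9/\delta)$. Generic Bernstein for sub-exponential variables comes with unspecified absolute constants $c, c', C$, so after the union bound your argument only yields $\|M\|\le C'\sigma_f\sigma_g\sqrt{N(m+n)\log(9/\delta)}$ for some undetermined absolute $C'$; ``tuning the absolute constants'' cannot recover the stated $4$ unless you replace Bernstein by a bound with explicit constants. Moreover, your claim that the hypothesis $N\ge 2(m+n)\log(1/\delta)$ ``precisely'' activates the sub-Gaussian branch does not survive inspection: at the minimal admissible $N=2(m+n)\log(1/\delta)$ your deviation level $s=4(m+n)\log(9/\delta)$ is of the same order as $N$, so $t=C\sigma_f\sigma_g\sqrt{Ns}$ with $C$ ``sufficiently large'' exceeds $N\sigma_f\sigma_g$ and you land in the sub-exponential branch, and for $\delta$ close to $1$ the hypothesis gives no control of $(m+n)\log 9$ by $N$ at all. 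The standard way to close both issues (and the route behind the cited lemma) is to avoid generic Bernstein: for fixed unit $u,v$ write $\tilde a_k=u^\tr f_k/\sigma_f$, $\tilde b_k=v^\tr g_k/\sigma_g$ and use the polarization identity $4\tilde a_k\tilde b_k=(\tilde a_k+\tilde b_k)^2-(\tilde a_k-\tilde b_k)^2$, which expresses $\sum_k \tilde a_k\tilde b_k$ as $\tfrac12(Q_1-Q_2)$ with $Q_1,Q_2$ independent $\chi^2_N$ variables; the Laurent--Massart $\chi^2$ tail bounds then give a single-pair estimate with explicit constants in which the leading $N$ terms cancel, and the sample-size condition is used only to absorb the linear-in-$t$ term. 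With that substitution, your net and union-bound bookkeeping goes through and does deliver the advertised constant.
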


The second lemma gives a non-asymptotic bound on the minimum singular value of a standard Wishart matrix~\cite[Lemma 2]{dean2017sample}.
\begin{lemma} \label{lemma:singularvalue}
    Let $u_k \sim \mathcal{N}(0, \sigma_u^2 I_m), k = 1, \ldots, N$ be i.i.d vectors. With probability at least $1 - \delta$, we have
    $$
        \sqrt{\frac{1}{\sigma_u^2}\lambda_{\min}\left( \sum_{k=1}^N u_ku_k^\tr\right)} \geq \sqrt{N} - \sqrt{m} - \sqrt{2\log(1/\delta)}.
        \vspace{2mm}
    $$
\end{lemma}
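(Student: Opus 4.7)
The plan is to reduce the claim to a standard statement about the minimum singular value of a rectangular Gaussian matrix, and then combine Gordon's theorem for its expectation with Gaussian concentration for its deviation. First I would stack the $u_k$ into a matrix $U = [u_1,\ldots,u_N]\in\mathbb{R}^{m\times N}$ so that $\sum_{k=1}^N u_k u_k^\tr = UU^\tr$. Thus $\lambda_{\min}(\sum_k u_k u_k^\tr) = \sigma_{\min}(U)^2$, where $\sigma_{\min}(U)$ denotes the smallest singular value (nontrivial because $N\ge m$, which is implicit in the statement). Normalizing, $\tilde U = U/\sigma_u$ has i.i.d.\ $\mathcal{N}(0,1)$ entries, and the inequality to prove becomes
\[
\sigma_{\min}(\tilde U) \;\ge\; \sqrt{N}-\sqrt{m}-\sqrt{2\log(1/\delta)}
\]
with probability at least $1-\delta$.

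Next I would invoke Gordon's inequality (a consequence of the Slepian--Gordon comparison lemma for Gaussian processes), which gives the lower bound on the expectation
\[
\mathbb{E}\bigl[\sigma_{\min}(\tilde U)\bigr] \;\ge\; \sqrt{N}-\sqrt{m}.
\]
This is the classical Davidson--Szarek estimate and is the analytical heart of the proof. The derivation applies Gordon's comparison to the two Gaussian processes $X_{u,v}=u^\tr \tilde U v$ and $Y_{u,v}=\langle g,u\rangle+\langle h,v\rangle$ indexed by unit vectors $(u,v)\in S^{m-1}\times S^{N-1}$, and then takes the min-max over $v$ and the max over $u$ after noting that $\sigma_{\min}(\tilde U)=\min_{\|v\|=1}\max_{\|u\|=1} u^\tr \tilde U v$ when $N\ge m$.

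The remaining step is concentration. The map $M\mapsto \sigma_{\min}(M)$ is $1$-Lipschitz with respect to the Frobenius norm (since singular values are $1$-Lipschitz in the operator norm, hence also in the Frobenius norm). Viewing $\sigma_{\min}(\tilde U)$ as a $1$-Lipschitz function of the $mN$ i.i.d.\ standard Gaussians comprising $\tilde U$, the Borell--Tsirelson--Ibragimov--Sudakov Gaussian concentration inequality yields
\[
\Pr\!\left(\sigma_{\min}(\tilde U) \le \mathbb{E}[\sigma_{\min}(\tilde U)] - t\right) \le e^{-t^2/2}
\]
for all $t\ge 0$. Setting $t=\sqrt{2\log(1/\delta)}$ gives the required tail bound, and combining with the Gordon bound on the expectation produces the claim after rescaling by $\sigma_u$.

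The only real obstacle is Gordon's lower bound on $\mathbb{E}[\sigma_{\min}(\tilde U)]$, since the concentration step is routine and the reduction is purely algebraic; if one is willing to cite Davidson--Szarek directly (as Dean et al.\ effectively do), the proof collapses to a one-line application of Gaussian concentration to a $1$-Lipschitz functional of the Gaussian matrix.
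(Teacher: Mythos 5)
Your proof is correct, and it is exactly the standard argument behind this statement: the paper itself gives no proof, citing it directly as Lemma 2 of Dean et al., which in turn rests on the Davidson--Szarek/Gordon bound $\mathbb{E}[\sigma_{\min}(\tilde U)]\ge\sqrt{N}-\sqrt{m}$ combined with Borell--TIS concentration for the $1$-Lipschitz functional $\sigma_{\min}$ --- precisely the two ingredients you assemble. Nothing is missing; your reduction, the expectation bound, and the $t=\sqrt{2\log(1/\delta)}$ tail step match the underlying source.
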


Using these two lemmas, we show bounds of $\|UU^\tr\|$, $\left\|vU^\tr\right\|$, $\left\|WU^\tr\right\|$ in the following propositions, and their proofs are provided in Appendix~\ref{appendix:proofs}.
\begin{proposition} \label{prop:U}
Fix a $0<\delta <1$, and $N \geq 8mT + 16 \log(T/\delta)$. We have with probability at least $1 - \delta$
\begin{equation} \label{eq:UUmin}
         \lambda_{\min}(UU^\tr) \geq \frac{1}{4}\sigma_u^2 N.
\vspace{2mm}
\end{equation}
\end{proposition}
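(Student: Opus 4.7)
The plan is to reduce $\lambda_{\min}(UU^\tr)$ to a standard Wishart lower bound via a PSD-domination trick, thereby bypassing the intra-rollout correlations among the columns of each $U^{(i)}$. First, I would expand $UU^\tr = \sum_{i=1}^N U^{(i)}(U^{(i)})^\tr = \sum_{i=1}^N \sum_{c=1}^T z_c^{(i)}(z_c^{(i)})^\tr$, where $z_c^{(i)} \in \mathbb{R}^{mT}$ denotes the $c$-th column of $U^{(i)}$. Since each summand is positive semidefinite, dropping all $c<T$ gives the PSD lower bound
\begin{equation*}
UU^\tr \;\succeq\; \sum_{i=1}^N z_T^{(i)}(z_T^{(i)})^\tr.
\end{equation*}

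Next, I would read off the last column from the Toeplitz matrix in~\eqref{eq:U&W}: $z_T^{(i)} = (u_{T-1}^{(i)\tr}, u_{T-2}^{(i)\tr}, \ldots, u_0^{(i)\tr})^\tr$ is a reordered stack of the i.i.d.\ inputs $u_0^{(i)}, \ldots, u_{T-1}^{(i)}$, so $z_T^{(i)} \sim \mathcal{N}(0,\sigma_u^2 I_{mT})$, and the $N$ vectors are mutually independent across rollouts. Lemma~\ref{lemma:singularvalue}, applied with ambient dimension $mT$ in place of $m$ (and with failure probability $\delta/T$ absorbed into the logarithm), then gives, with probability at least $1-\delta$,
\begin{equation*}
\sqrt{\tfrac{1}{\sigma_u^2}\lambda_{\min}\!\left(\sum_{i=1}^N z_T^{(i)}(z_T^{(i)})^\tr\right)} \;\geq\; \sqrt{N} - \sqrt{mT} - \sqrt{2\log(T/\delta)}.
\end{equation*}

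Finally, the hypothesis $N \geq 8mT + 16\log(T/\delta)$ combined with the elementary inequality $(\sqrt{a}+\sqrt{b})^2 \leq 2(a+b)$ gives $4\bigl(\sqrt{mT}+\sqrt{2\log(T/\delta)}\bigr)^2 \leq 8mT + 16\log(T/\delta) \leq N$, hence $\sqrt{mT} + \sqrt{2\log(T/\delta)} \leq \sqrt{N}/2$ and the right-hand side above is at least $\sqrt{N}/2$. Squaring and combining with the PSD bound yields the claim $\lambda_{\min}(UU^\tr) \geq \sigma_u^2 N/4$. There is no real obstacle: the only substantive idea is the PSD drop, which exploits that a single column per rollout already forms a full $mT$-dimensional Gaussian vector, so the correlations among the other columns can simply be discarded. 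Trying to use all $T$ columns per rollout would give a sharper sample complexity but would force a much more delicate argument involving the Toeplitz-structured Wishart matrix, which is not needed here.
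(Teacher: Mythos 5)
Your proposal is correct and takes essentially the same route as the paper's proof: both lower-bound $UU^\tr$ in the PSD order by the sum over rollouts of the outer products of the last Toeplitz column, which is a full $\mathcal{N}(0,\sigma_u^2 I_{mT})$ vector independent across rollouts, then apply Lemma~\ref{lemma:singularvalue} (with failure probability $\delta/T$) and the inequality $(a+b)^2 \leq 2(a^2+b^2)$ to get $\lambda_{\min}(UU^\tr) \geq \sigma_u^2 N/4$. The only difference is cosmetic: the paper first states the Wishart bound for every column index $t$ and union bounds over them before discarding all but $t=T-1$, whereas you drop the unused columns immediately, which is a slightly cleaner presentation of the same argument.
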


\begin{proposition} \label{prop:vU}
Fix a  $0<\delta <1$, and $N \geq 2(m + l) \log(T/\delta)$. We have with probability at least $1 - \delta$
    $$
          \|vU^\tr\| \leq 2\sigma_v\sigma_u\sqrt{2T(T+1)N(m+l)\log(9T/\delta)}.
          \vspace{2mm}
    $$
\end{proposition}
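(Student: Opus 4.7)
The plan is to exploit the block Toeplitz structure of $U$ and reduce the estimate to $T$ applications of Lemma~\ref{Lemma:spectralnorm}. First I would partition each $U^{(i)} \in \mathbb{R}^{mT \times T}$ from~\eqref{eq:U&W} into $T$ block rows $R_0^{(i)}, \ldots, R_{T-1}^{(i)}$ of size $m \times T$, where the $t$-th column of $R_j^{(i)}$ equals $u^{(i)}_{t-j}$ for $t \geq j$ and is zero otherwise. Reading off the corresponding block-column decomposition of $v U^\tr$ gives $vU^\tr = [M_0,\, M_1,\, \ldots,\, M_{T-1}]$ with
\begin{equation*}
M_j \;=\; \sum_{i=1}^N \sum_{s=0}^{T-1-j} v^{(i)}_{s+j}\,\bigl(u^{(i)}_s\bigr)^\tr \;\in\; \mathbb{R}^{l\times m}.
\end{equation*}

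Next I would verify that for each fixed $j$ the $N(T-j)$ pairs $\{(v^{(i)}_{s+j},\,u^{(i)}_s)\}_{s,i}$ are mutually independent Gaussian pairs: rollouts are independent by construction, and within a single rollout $\{u_s\}$ and $\{v_t\}$ are i.i.d.\ across time and independent of each other, so shifting the time index of $v$ by $j$ does not introduce any coupling. Lemma~\ref{Lemma:spectralnorm} then yields, for each $j$ with per-event confidence $\delta/T$,
\begin{equation*}
\|M_j\| \;\leq\; 4\sigma_v\sigma_u\sqrt{N(T-j)(m+l)\log(9T/\delta)},
\end{equation*}
provided $N(T-j) \geq 2(m+l)\log(T/\delta)$. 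The worst case is $j = T-1$, which reduces to $N \geq 2(m+l)\log(T/\delta)$ --- exactly the hypothesis of the proposition. A union bound over the $T$ values of $j$ delivers all these bounds simultaneously with probability at least $1-\delta$.

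Finally I would assemble the block decomposition via $\|vU^\tr\|^2 = \bigl\|\textstyle\sum_j M_j M_j^\tr\bigr\| \leq \sum_{j=0}^{T-1}\|M_j\|^2$, plug in the per-block estimate, and collapse the arithmetic sum $\sum_{j=0}^{T-1}(T-j) = T(T+1)/2$ to obtain $\|vU^\tr\| \leq 2\sigma_v\sigma_u\sqrt{2\,T(T+1)\,N\,(m+l)\log(9T/\delta)}$, matching the claim. The only step that I expect to need genuine care is the independence bookkeeping for the $M_j$'s --- one has to make sure that time-shifting $v$ by $j$ while keeping $u$ at time $s$ (both within the same rollout) leaves the $N(T-j)$ summands jointly independent; once this is pinned down, the union bound and the arithmetic-sum accounting are routine and no concentration tool beyond Lemma~\ref{Lemma:spectralnorm} is required.
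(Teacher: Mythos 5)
Your proposal is correct and follows essentially the same route as the paper: the same block-column decomposition of $vU^\tr$ into the shifted sums $M_j$, the same per-block application of Lemma~\ref{Lemma:spectralnorm} with failure probability $\delta/T$, the same union bound, and the same collapse of $\sum_{j=0}^{T-1}(T-j)=T(T+1)/2$ into the stated constant. The only cosmetic difference is that you assemble the blocks via $\|vU^\tr\|^2=\bigl\|\sum_j M_jM_j^\tr\bigr\|\leq\sum_j\|M_j\|^2$ directly, whereas the paper cites its block-matrix norm lemma (Lemma~\ref{lemma:blockmatrix}), which for a single block row amounts to the same estimate.
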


\begin{proposition} \label{prop:WU}
Fix a $0<\delta <1$, and $N \geq 4(m + q) \log(T/\delta)$. We have with probability at least $1 - \delta$
    $$
          \|WU^\tr\| \leq 4\sigma_w\sigma_u\sqrt{\left(\frac{T^3}{3}\! + \!\frac{T^2}{2}\! +\! \frac{T}{6}\right)2N(m+q)\log(9T/\delta)}.
    \vspace{2mm}
    $$
\end{proposition}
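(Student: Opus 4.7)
The plan is to decompose $WU^\tr$ column-wise, apply Lemma~\ref{Lemma:spectralnorm} to each piece, and aggregate via the triangle inequality together with Cauchy--Schwarz. This closely parallels the approach for $\|vU^\tr\|$ in Proposition~\ref{prop:vU}; the additional wrinkle is that $W^{(i)}$ itself has Toeplitz structure, which is handled by tracking the effective support of each column.

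First I would write $WU^\tr = \sum_{i=1}^N W^{(i)}(U^{(i)})^\tr = \sum_{t=0}^{T-1} A_t$, where
$$A_t := \sum_{i=1}^N \tilde w_t^{(i)}(\tilde u_t^{(i)})^\tr,$$
and $\tilde w_t^{(i)}$, $\tilde u_t^{(i)}$ denote the $t$-th columns of $W^{(i)}$ and $U^{(i)}$. By the explicit form in~\eqref{eq:U&W}, only the first $(t+1)q$ entries of $\tilde w_t^{(i)}$ are non-zero, and on that support they are i.i.d.\ $\mathcal{N}(0,\sigma_w^2 I_{(t+1)q})$; analogously $\tilde u_t^{(i)}$ has effective dimension $(t+1)m$ with covariance $\sigma_u^2 I_{(t+1)m}$ there. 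Across $i$ these vectors are i.i.d., and since process noise is independent of the input, $\tilde w_t^{(i)}$ is independent of $\tilde u_t^{(i)}$. Restricting to the active subspace therefore casts $A_t$ as a sum of outer products of isotropic Gaussians, and Lemma~\ref{Lemma:spectralnorm} gives
$$\|A_t\| \leq 4\sigma_w\sigma_u\sqrt{N(t+1)(m+q)\log(9T/\delta)}$$
with probability at least $1-\delta/T$.

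A union bound over $t = 0, \ldots, T-1$ makes all these bounds hold simultaneously with probability at least $1-\delta$. Triangle inequality followed by Cauchy--Schwarz then yields
$$\|WU^\tr\| \leq \sum_{t=0}^{T-1}\|A_t\| \leq \sqrt{T}\,\sqrt{\sum_{t=0}^{T-1}\|A_t\|^2} \leq 4\sigma_w\sigma_u\sqrt{NT\cdot\tfrac{T(T+1)}{2}\cdot(m+q)\log(9T/\delta)},$$
using $\sum_{t=0}^{T-1}(t+1) = T(T+1)/2$. Up to accounting for the polynomial-in-$T$ prefactor inside the square root (the announced $T^3/3 + T^2/2 + T/6 = \sum_{k=1}^T k^2$ is the natural expression if the Cauchy--Schwarz step is arranged slightly differently), this reproduces the stated bound.

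The main obstacle I anticipate is clean sample-size bookkeeping: Lemma~\ref{Lemma:spectralnorm} applied to $A_{T-1}$, whose active subspace has dimension $T(m+q)$, nominally requires $N \gtrsim T(m+q)\log(T/\delta)$, which is more restrictive than the stated $N \geq 4(m+q)\log(T/\delta)$. To reconcile this, I would work at the atomic level by bounding each $\Xi_{s,s'} := \sum_{i=1}^N w_s^{(i)}(u_{s'}^{(i)})^\tr \in \mathbb{R}^{q\times m}$ (which has effective dimension only $m+q$) with failure probability $\delta/T^2$, so that the condition relaxes to $N \geq 2(m+q)\log(T^2/\delta) \leq 4(m+q)\log(T/\delta)$ and the bound $\|\Xi_{s,s'}\| \leq 4\sigma_w\sigma_u\sqrt{N(m+q)\log(9T^2/\delta)}$ holds uniformly. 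Since $A_t$ is exactly the $(t+1)\times(t+1)$ block matrix whose $(a,b)$-block equals $\Xi_{t-a,t-b}$, the final Cauchy--Schwarz aggregation over $(a,b,t)$ then preserves the $O(\sqrt{T^3})$ scaling without inflating the sample requirement.
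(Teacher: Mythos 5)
Your first decomposition (column-wise, applying Lemma~\ref{Lemma:spectralnorm} to the full columns $\tilde w_t^{(i)},\tilde u_t^{(i)}$) does give a bound of the right order --- indeed it implies the stated inequality --- but, as you note yourself, the worst column pair has effective dimension $T(m+q)$, so this route needs $N \ge 2T(m+q)\log(T/\delta)$, strictly stronger than the proposition's hypothesis $N \ge 4(m+q)\log(T/\delta)$; it therefore does not prove the statement as written. The real problem is that your proposed repair does not restore the bound. Once you pass to the atomic blocks $\Xi_{s,s'}=\sum_{i=1}^N w_s^{(i)}(u_{s'}^{(i)})^\tr$, each of which has norm $\Theta(\sqrt{N})$, every deterministic aggregation (triangle inequality, Lemma~\ref{lemma:blockmatrix}, Frobenius/Cauchy--Schwarz) discards the stochastic cancellation across the within-rollout time index. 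Concretely, $A_{T-1}$ is a $T\times T$ block array of such atoms, so the block-norm route gives $\|A_{T-1}\|\lesssim T\sqrt{N}$ instead of the $\sqrt{TN}$ obtained by applying Lemma~\ref{Lemma:spectralnorm} to the whole column; summing over $t$, or aggregating over $(a,b,t)$ in any other deterministic fashion, then yields an overall bound of order $T^{2}\sqrt{N}$, not the claimed $T^{3/2}\sqrt{N}$. Your assertion that the final Cauchy--Schwarz step ``preserves the $O(\sqrt{T^3})$ scaling'' is unfounded: Cauchy--Schwarz cannot recover a $\sqrt{T}$ factor that was lost when the time sum was pulled outside the concentration lemma.

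The paper's proof threads exactly this needle. It partitions $WU^\tr$ itself into its $q\times m$ blocks; each block $(WU^\tr)_{lj}=\sum_{i=1}^N\sum_{t} w_t^{(i)}(u_{t+l-j}^{(i)})^\tr$ is a sum of $N(T-\max(l,j)+1)$ i.i.d.\ outer products of $q$- and $m$-dimensional independent Gaussian vectors, so Lemma~\ref{Lemma:spectralnorm} is applied to this \emph{entire double sum}: the effective dimension stays at $m+q$ (hence $N\ge 4(m+q)\log(T/\delta)$ suffices, with failure probability $\delta/T^2$ per block), while the $\sqrt{N(T-j+1)}$ concentration gain is retained. The block norms are then combined via Lemma~\ref{lemma:blockmatrix} together with a Frobenius bound on the scalar matrix with entries $\sqrt{T-\max(l,j)+1}$, whose squared Frobenius norm is $\sum_{k=1}^T k^2 = T^3/3+T^2/2+T/6$. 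If you want to salvage your argument, the fix is not to shrink the atoms to single time indices but to keep the sum over both the rollout index and the time index inside each application of Lemma~\ref{Lemma:spectralnorm} --- which is precisely this block decomposition.
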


With the results in Propositions~\ref{prop:U}---\ref{prop:WU}, Theorem~\ref{theo:mainresult} can be easily proved: Let $N \geq 8mT + 4(m+q+l+4)\log(3T/\delta)$. Combining the estimates in Propositions~\ref{prop:U}---\ref{prop:WU} via union bound, we have with probability at least $1-\delta$,
$$
    \begin{aligned}
         &\|\hat{G} - G\| \\
         \leq &\left\|(UU^\tr)^{-1}\right\|\left(\|F\|\left\|WU^\tr\right\| + \|D_v\|\left\|vU^\tr\right\|\right) \\
         \leq &\frac{4}{\sigma_u N} \Bigg(2\sigma_v\|D_v\|\sqrt{2T(T+1)N(m+l)\log(27T/\delta)}  \\
         & \qquad \qquad + \left. 4\sigma_w\|F\|\sqrt{\left(\frac{T^3}{3} + \frac{T^2}{2} + \frac{T}{6}\right)2N(m+q)\log(27T/\delta)}\right) \\
         = &\frac{\sigma_v C_1+ \sigma_w C_2}{\sigma_u} \sqrt{\frac{1}{N}},
    \end{aligned}
$$
where the parameters $C_1, C_2$ are defined in Theorem~\ref{theo:mainresult}.

We conclude this section by reiterating that details on Corollary~\ref{corollary:ABCD} are postponed to Appendix~\ref{appendix:Ho-kalman}, and that discussions of non-zero initial state and the length $T$ of Markov parameter $G$ are provided in Appendices~\ref{appendix:initialstate} and~\ref{appendix:lengthT}, respectively.






\section{Numerical Experiments} \label{section:experiments}

We present a series of numerical experiments to demonstrate the performance of the OLS estimator~\eqref{eq:leastsquares}. We compare our method with the estimation approaches in~\cite{sun2020finite,simchowitz2019learning} for marginally stable systems, and with~\cite{sun2020finite} for unstable systems. Note that other methods in~\cite{oymak2019non,sarkar2019finite} are only suitable for strictly stable systems. Recall that~\cite{sun2020finite} used the same multi-rollout setup but only records the last data sample $y_{T-1}^{(i)}$, and~\cite{simchowitz2019learning} introduced a pre-filtered OLS relying on a single-rollout setup for which we set the length as $NT$. All experiments were carried out in MATLAB, and our scripts can be downloaded from {\small \url{https://github.com/zhengy09/SysId}}.

\subsection{Estimation of marginally stable and unstable systems}

We first consider a marginally stable system
\begin{equation} \label{eq:marginallystable}
\begin{aligned}
    A &= \begin{bmatrix} 1 & \Delta \\ 0 & 1 \end{bmatrix}, \;  B = \begin{bmatrix} 0\\1 \end{bmatrix}, \; C = \begin{bmatrix} 1 & 0 \end{bmatrix}, \\
    D &= 0, \; B_w = B, \; D_v = 1,
\end{aligned}
\end{equation}
with $\Delta = 0.2$, which corresponds to a discretization of Netwon's second law. We also consider an open-loop unstable system, adapted from~\cite{dean2017sample}, as follows
\begin{equation} \label{eq:unstable}
\begin{aligned}
    A &= \begin{bmatrix} 1.01 & 0.01 & 0 \\ 0.01 & 1.01 & 0.01 \\ 0 & 0.01 & 1.01 \end{bmatrix}, \;   \; C = \begin{bmatrix} 1 & 0 & 0 \end{bmatrix}, \\
   B &= I_3,\; D = 0, \; B_w = I_3, \; D_v = 1.
\end{aligned}
\end{equation}
In our data collection, we used inputs with $\sigma_u = 1$, and noises with $\sigma_w = 0.2$ and $\sigma_v = 0.5$. For the multi-rollout setup, the rollout length is set as $T = 10$, and we vary the number of rollouts from 50 to 500.  For the method in~\cite{simchowitz2019learning}, we run a new simulation with a single rollout of length $NT$.
The behavior of different OLS estimates is illustrated in Fig.~\ref{fig:stable&unstable_systems}. As expected, increasing the number of rollouts reduces the estimation errors for all methods. For all scenarios, utilizing all data samples in our method returns a significantly better estimation than using the final data sample only in~\cite{sun2020finite}. Note that we chose a typical set of parameters for the pre-filtered OLS~\cite{simchowitz2019learning}, and by finely tuning the parameters, its performance might be better than reported here.

\begin{figure}[t]
    \centering
     \hspace{-2mm}
	\subfigure[]
	{
    \includegraphics[scale = 0.53]{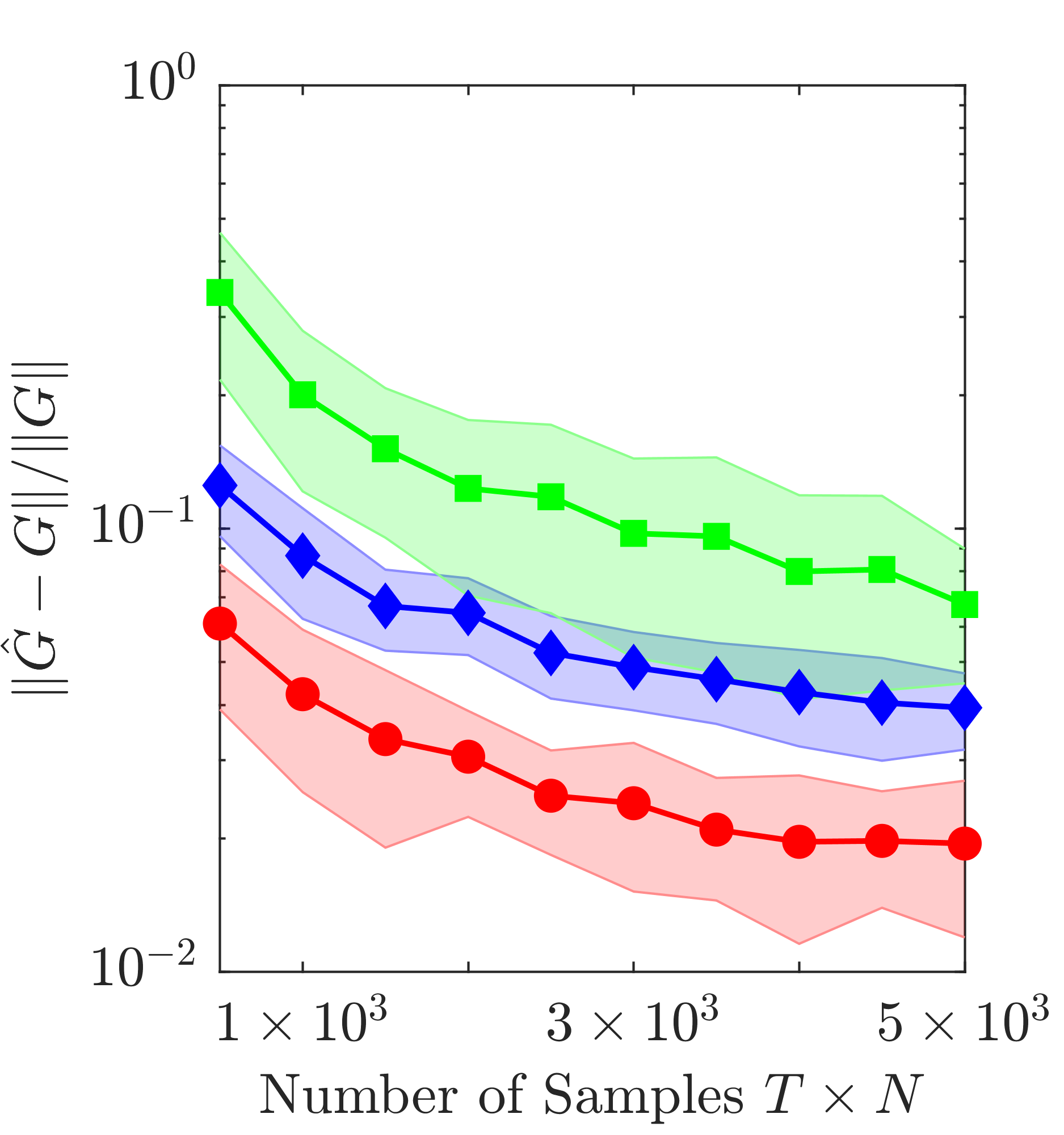}
    }
        \hspace{10mm}
    \subfigure[]
	{
    \includegraphics[scale = 0.53]{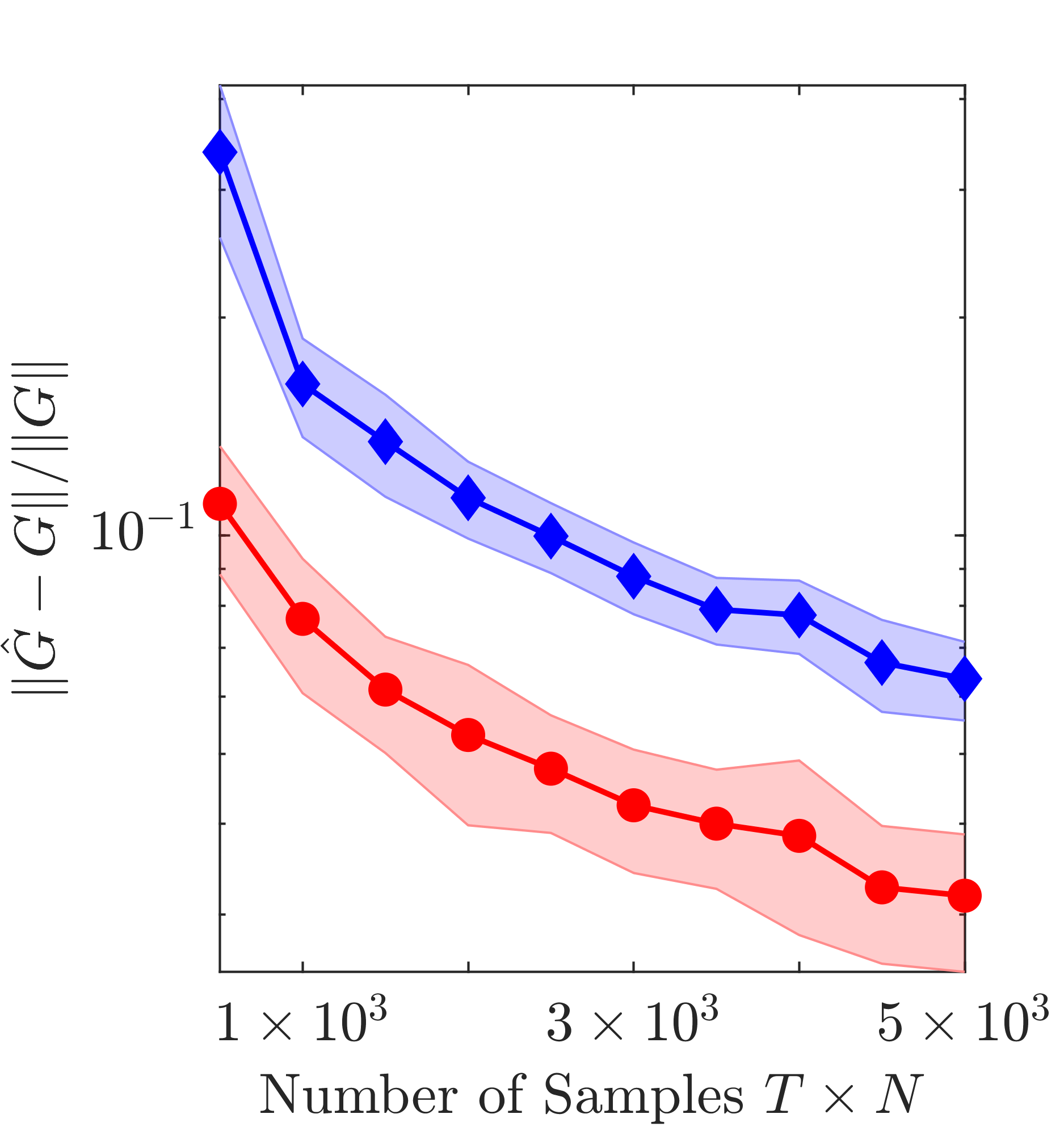}
    }
    \caption{Comparison with different methods: (a) marginally stable system $\lambda_1 = \lambda_2 = 1$~\eqref{eq:marginallystable}; (b) open-loop unstable system~\eqref{eq:unstable}. The green curve with square dots denotes the OLS from~\cite{simchowitz2019learning} (single rollout); The blue curve with diamond dots denotes the OLS from~\cite{sun2020finite} (multi-rollout); The red curve with circle dots denotes the OLS~\eqref{eq:leastsquares} in our work.}
    \label{fig:stable&unstable_systems}
    \label{fig:stable&unstable_systems}
\end{figure}

\begin{figure*}[t]
\centerfloat
\setlength{\abovecaptionskip}{2pt}
  	{
    \includegraphics[scale=0.7]{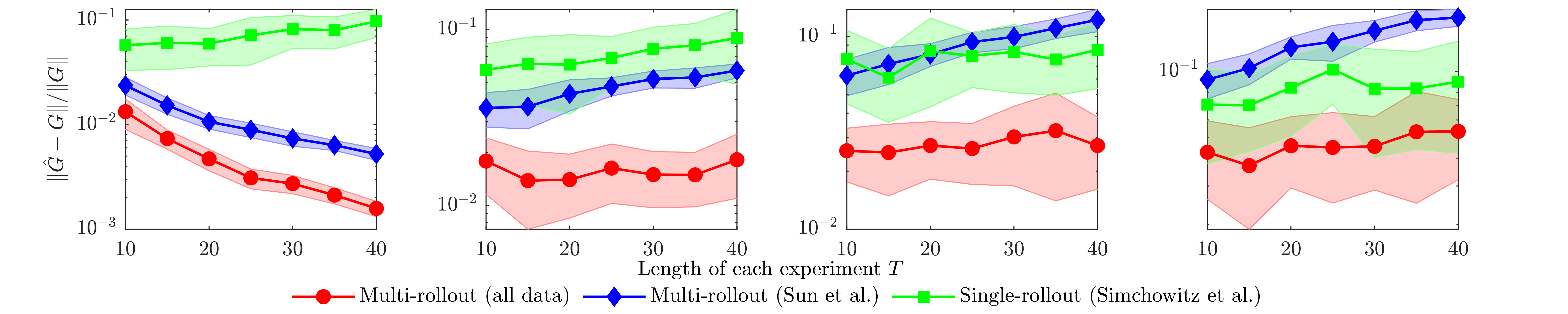}
    }
    \caption{Estimation error for system~\eqref{eq:marginallystable} with varying length $T$ and process noises. Left to right: $\sigma_w = 0$; $\sigma_w = 0.2$; $\sigma_w = 0.4$; $\sigma_w = 0.6$}
    \label{fig:stable_varyingT}
 \end{figure*}

\begin{figure*}[t]
    \centerfloat
    \setlength{\abovecaptionskip}{2pt}
    \includegraphics[scale=0.7]{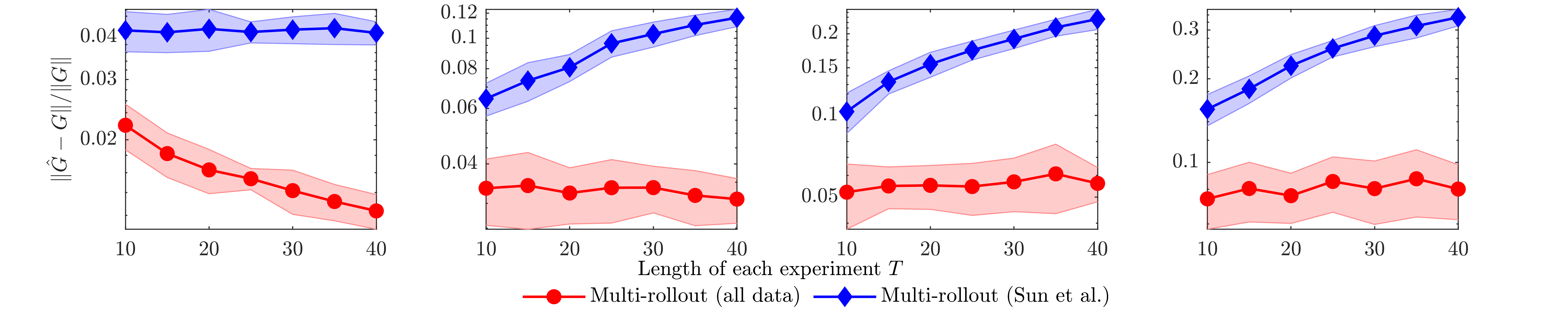}
    \caption{Estimation error for system~\eqref{eq:unstable} with varying length $T$ and process noises. Left to right: $\sigma_w = 0$; $\sigma_w = 0.2$; $\sigma_w = 0.4$; $\sigma_w = 0.6$.}
    \label{fig:unstable_varyingT}
    \vspace{-2mm}
\end{figure*}

\subsection{Experiments with varying length $T$ of Markov Parameters}

Here, we present numerical results in which we fix the number of rollouts $N = 500$ and vary the rollout length from 10 to 40. The standard deviations of input and measurement noise are $\sigma_u =1, \sigma_v = 0.5$, and we vary the standard deviation of process noise from 0 to 0.6. The normalized estimation errors are illustrated in Fig.~\ref{fig:stable_varyingT} and Fig.~\ref{fig:unstable_varyingT}. Interestingly, varying the length $T$ of the Markov parameters seems to have little impact on the normalized errors of the OLS~\eqref{eq:leastsquares} and the prefiltered OLS in~\cite{simchowitz2019learning}, while the performance of~\cite{sun2020finite} worsens quickly when the process noise $\sigma_w$ is non-zero. Again, this confirms the benefits of utilizing all the data samples, and also indicate the theoretical bound in Theorem~\ref{theo:mainresult} is not tight. Another interesting point is that the behavior of the prefiltered OLS in~\cite{simchowitz2019learning} seems independent of the process noise, suggesting that prefiltering can effectively mitigate the noise level. It would be interesting to investigate how to incorporate prefiltering in the multi-rollout setup.

\subsection{Experiments with varying spectral radius $\rho(A)$}

Our final experiment demonstrates the influence of the spectral radius $\rho(A)$ on estimation performance. We consider a system with two inputs, two outputs, and state dimension being three, \emph{i.e.}, $n = 3, m = 2, p = 2$. We generate the system data as follows: matrix $A$ with random integers from $1$ to $5$, and matrices $B,C,D$ with random integers from $-2$ to $2$.   Then, we re-scale the matrix $A$ to adjust its spectral radius $\rho(A)$. In our simulations, we tested $\rho(A)$ from $1$ to $10$. The numerical results are shown in Figure~\ref{fig:varyingA}. In this experiment, we fixed the length of Markov parameters as $T = 9$, and the number of rollouts as $N = 1000$. We used random inputs with $\sigma_u = 1$, and assumed measurement noises with $\sigma_v = 0.5$.
As clearly observed in Figure~\ref{fig:varyingA}, when there is no process noise $\sigma_w = 0$ (the left subfigure), the estimation error is independent to the spectral radius $\rho(A)$. When the process noise becomes non-zero (the middle and right subfigures), the estimation errors become worse quickly as $\rho(A)$ goes bigger. These results are consistent with the theoretical prediction in Theorem~\ref{theo:mainresult}.

\begin{figure}[h]
        \centering
        \includegraphics[scale=0.6]{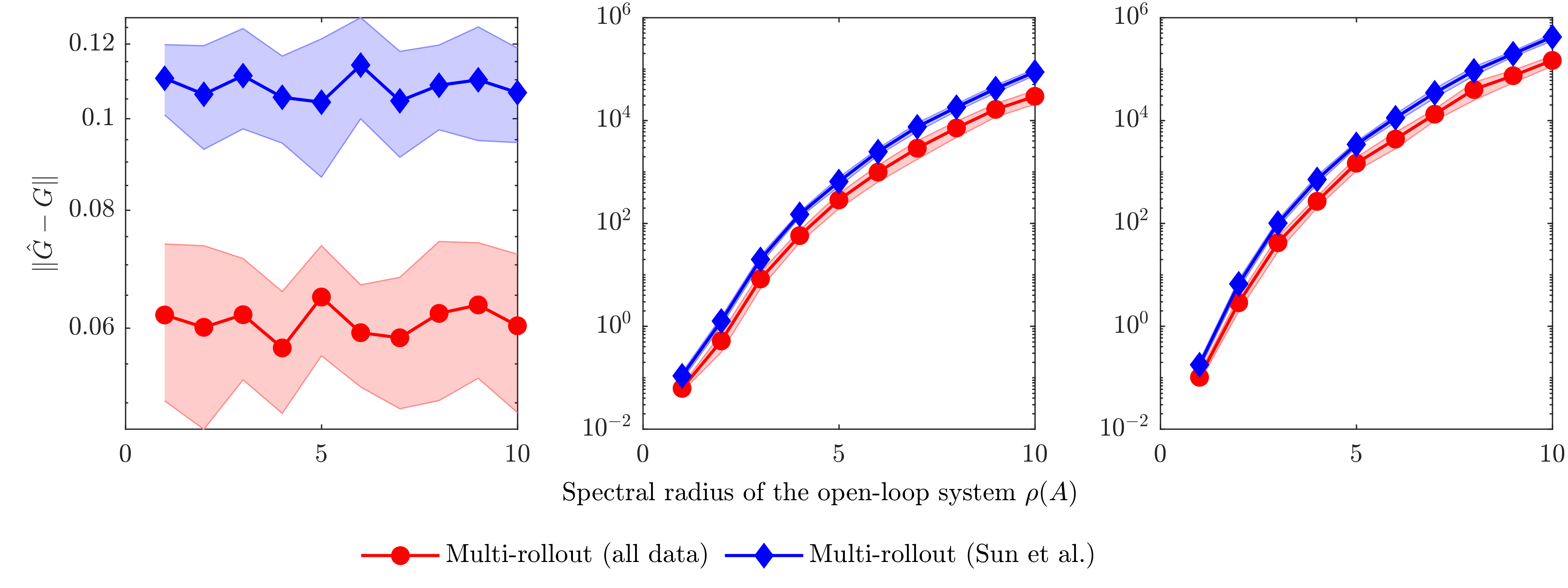}
        \caption{Estimation error with varying spectral radius $\rho(A)$ and process noises. Left to right: $\sigma_w = 0$; $\sigma_w = 0.01$; $\sigma_w = 0.05$.}
        \label{fig:varyingA}
    \end{figure}

\section{Conclusions} \label{section:conclusion}

We have introduced a simple OLS estimator for partially observed LTI system identification using multiple~trajectories and analyzed its corresponding finite sample complexity. Our method utilizes all data samples in the trajectories, thus making very efficient use of the available data, as~demonstrated in our experiments. For future work, it would be interesting to improve our analysis for the OLS estimator, and to derive certain minimax risk lower bounds for identifying general LTI systems in multi-rollout setup. It is also interesting to investigate other algorithmic variants (e.g., using outputs in a prefiltering step~\cite{simchowitz2019learning,tsiamis2019finite}) to mitigate system instability, and to adapt
analysis techniques for asymptotic normality~\cite{chiuso2004asymptotic} and data-dependent finite sample analysis~\cite{campi2005guaranteed} in the multi-rollout setup. Similar to the Coarse-ID procedure~\cite{dean2017sample}, we are also interested in combining the results with robust synthesis techniques (e.g.,~\cite{zhou1996robust,zheng2019equivalence,furieri2019input,anderson2019system}) to derive end-to-end guarantees for data-driven control.



\section*{Acknowledgement}

We would like to thank Yujie Tang and Runyu Zhang for insightful discussions on this work. YZ would also like to thank Luca Furieri for nice conversations that have lead to this work.


\appendix
\vspace{10mm}
\noindent\textbf{\Large Appendix}
\section{Proofs of Propositions~\ref{prop:U}-\ref{prop:WU}} \label{appendix:proofs}
In this appendix, we provide the proofs of Propositions~\ref{prop:U}-\ref{prop:WU}. For self-completeness, we first introduce three technical lemmas.

 \begin{lemma}[{\cite{horn2012matrix}}] \label{lemma:PSDeigen}
     Given a positive semidefinite matrix $A \in \mathbb{S}^n_+$, and another symmetric matrix $B \in \mathbb{S}^n$, we have
     $$
        \lambda_{\min}(A + B) \geq \lambda_{\min}(B).
        \vspace{2mm}
     $$
 \end{lemma}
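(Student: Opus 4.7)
The plan is to invoke the Courant--Fischer (Rayleigh quotient) variational characterization of the minimum eigenvalue of a symmetric matrix. For any $M \in \mathbb{S}^n$, one has $\lambda_{\min}(M) = \min_{\|x\|=1} x^\tr M x$, where the minimum is taken over unit vectors in $\mathbb{R}^n$. Since $A+B$ is symmetric (sum of two symmetric matrices), this characterization applies and gives $\lambda_{\min}(A+B) = \min_{\|x\|=1}(x^\tr A x + x^\tr B x)$.

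Next, I would exploit the hypothesis that $A$ is positive semidefinite, which means $x^\tr A x \geq 0$ for every $x \in \mathbb{R}^n$. Consequently, pointwise on the unit sphere, $x^\tr(A+B)x \geq x^\tr B x$. Taking the minimum over unit vectors of both sides (the inequality being preserved because the feasible set is identical) yields $\lambda_{\min}(A+B) \geq \min_{\|x\|=1} x^\tr B x = \lambda_{\min}(B)$, which is the claim.

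An entirely equivalent route would be to quote Weyl's inequality in the form $\lambda_{\min}(A+B) \geq \lambda_{\min}(A) + \lambda_{\min}(B)$ and then drop the nonnegative term $\lambda_{\min}(A) \geq 0$ supplied by $A \succeq 0$. The Rayleigh-quotient proof above is essentially a self-contained derivation of the needed special case and avoids invoking the full Weyl machinery.

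There is no real obstacle: this is a textbook fact from Horn and Johnson, and the entire proof fits in two lines once the variational formula is recalled. The only small items worth verifying are that $A+B$ is indeed symmetric so that the variational characterization applies, and that the minimum and the pointwise inequality are over the same feasible set of unit vectors, both of which are immediate.
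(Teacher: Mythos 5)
Your proof is correct: the Rayleigh-quotient argument ($\lambda_{\min}(A+B)=\min_{\|x\|=1}x^\tr(A+B)x \geq \min_{\|x\|=1}x^\tr Bx$, using $x^\tr Ax\geq 0$) is exactly the standard derivation of this fact. The paper itself offers no proof --- it simply cites Horn and Johnson --- so there is nothing to diverge from; your two-line variational argument (or the Weyl-inequality shortcut you mention) is precisely the textbook justification being invoked.
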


\begin{lemma}[{\cite{horn2012matrix}}]\label{lemma:frobeniusnorm}
 For any matrix  $A \in \mathbb{R}^{m \times n}$, we have
 $$
    \|A\| \leq \|A\|_{\text{F}} \leq \sqrt{\min\{m,n\}} \|A\|.
    \vspace{2mm}
 $$
 \end{lemma}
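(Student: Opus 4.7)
The plan is to reduce both inequalities to an elementary comparison among the singular values of $A$ via the singular value decomposition. Write $A = U \Sigma V^\tr$ with $U \in \mathbb{R}^{m \times m}$ and $V \in \mathbb{R}^{n \times n}$ orthogonal and $\Sigma \in \mathbb{R}^{m \times n}$ rectangular-diagonal containing the singular values $\sigma_1 \geq \sigma_2 \geq \cdots \geq \sigma_r > 0$, where $r = \text{rank}(A) \leq \min\{m,n\}$. The two relevant identities are then just bookkeeping: by definition of the spectral norm, $\|A\| = \sigma_1$; and since the Frobenius norm is invariant under orthogonal transformations, $\|A\|_{\text{F}}^2 = \text{Tr}(A A^\tr) = \text{Tr}(\Sigma \Sigma^\tr) = \sum_{i=1}^{r} \sigma_i^2$.

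The left inequality is immediate. Because every $\sigma_i^2 \geq 0$ and $\sigma_1$ is the largest, I would simply drop all but the leading term in the sum: $\|A\|_{\text{F}}^2 = \sum_{i=1}^r \sigma_i^2 \geq \sigma_1^2 = \|A\|^2$, and taking square roots yields $\|A\| \leq \|A\|_{\text{F}}$. For the right inequality, I would bound each $\sigma_i^2$ from above by $\sigma_1^2$ and use that there are at most $\min\{m,n\}$ nonzero singular values: $\|A\|_{\text{F}}^2 = \sum_{i=1}^r \sigma_i^2 \leq r \sigma_1^2 \leq \min\{m,n\}\, \|A\|^2$. Taking square roots gives $\|A\|_{\text{F}} \leq \sqrt{\min\{m,n\}}\, \|A\|$.

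There is no real obstacle in the proof; the statement is a textbook fact whose only ingredient beyond the definitions of the two norms is the SVD. It is worth noting that both ends of the bound are sharp: equality on the left is attained by any rank-one matrix, while equality on the right is attained by scalar multiples of an isometry onto a $\min\{m,n\}$-dimensional subspace (so that all $r$ singular values coincide and $r = \min\{m,n\}$). This confirms that the factor $\sqrt{\min\{m,n\}}$ cannot be improved without further structural assumptions on $A$, which is precisely why the appendix invokes this lemma when converting Frobenius-norm control of block matrices into spectral-norm bounds in the proofs of Propositions~\ref{prop:U}--\ref{prop:WU}.
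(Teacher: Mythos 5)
Your proof is correct: passing to the singular value decomposition, identifying $\|A\| = \sigma_1$ and $\|A\|_{\text{F}}^2 = \sum_{i=1}^r \sigma_i^2$, and bounding the sum below by its leading term and above by $r\sigma_1^2 \leq \min\{m,n\}\,\sigma_1^2$ is exactly the standard argument. The paper itself offers no proof of this lemma---it is simply cited from Horn and Johnson---so your SVD derivation is the canonical route and nothing further is needed; your remarks on sharpness (rank-one matrices for the left inequality, matrices with all singular values equal for the right) are also accurate.
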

 \begin{lemma}[{\cite[Lemma 2.10]{zhou1996robust}}] \label{lemma:blockmatrix}
     Let $A$ be a block partitioned matrix with
     $$
        A = \begin{bmatrix} A_{11} & A_{12} & \ldots &A_{1n} \\
        A_{21} & A_{22} & \ldots & A_{2n}\\
        \vdots & \vdots & \ddots & \vdots \\
        A_{m1} & A_{m2} & \ldots & A_{mn}\end{bmatrix},
     $$
     where each $A_{ij}$ has compatible dimensions. Then, we have
     $$
        \|A\| \leq \left\|\begin{bmatrix} \|A_{11}\| & \|A_{12}\| & \ldots &\|A_{1n}\| \\
        \|A_{21}\| & \|A_{22}\| & \ldots & \|A_{2n}\|\\
        \vdots & \vdots & \ddots & \vdots \\
        \|A_{m1}\| & \|A_{m2}\| & \ldots & \|A_{mn}\|\end{bmatrix}\right\|.
     \vspace{2mm}
     $$
 \end{lemma}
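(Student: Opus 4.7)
The plan is to reduce the block operator-norm bound to a scalar operator-norm bound on the matrix $\hat{A}$ whose $(i,j)$ entry is $\|A_{ij}\|$. Write $\|A\|=\sup_{\|x\|=1}\|Ax\|$ and, for any unit vector $x$, partition $x=(x_1^\tr,\ldots,x_n^\tr)^\tr$ conformably with the block columns of $A$ so that the products $A_{ij}x_j$ are well defined. The $i$-th block of $Ax$ equals $\sum_{j=1}^n A_{ij}x_j$, and combining the Euclidean triangle inequality with the defining inequality $\|A_{ij}x_j\|\leq\|A_{ij}\|\,\|x_j\|$ of the spectral norm produces a componentwise scalar upper bound on each of these blocks.

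Next, introduce the auxiliary vector $\hat{x}=(\|x_1\|,\ldots,\|x_n\|)^\tr\in\mathbb{R}^n$, which satisfies $\|\hat{x}\|^2=\sum_j\|x_j\|^2=\|x\|^2=1$. Aggregating the componentwise bound over $i$ yields
\begin{equation*}
\|Ax\|^2 \;=\; \sum_{i=1}^{m}\left\|\sum_{j=1}^n A_{ij}x_j\right\|^2 \;\leq\; \sum_{i=1}^{m}\left(\sum_{j=1}^n \|A_{ij}\|\,\|x_j\|\right)^2 \;=\; \|\hat{A}\hat{x}\|^2,
\end{equation*}
where the final equality recognizes the double sum as ordinary scalar matrix-vector multiplication applied to $\hat{A}$ and $\hat{x}$. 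Applying the definition of the spectral norm of $\hat{A}$ then gives $\|\hat{A}\hat{x}\|\leq\|\hat{A}\|\,\|\hat{x}\|=\|\hat{A}\|$, so $\|Ax\|\leq\|\hat{A}\|$, and taking the supremum over unit vectors $x$ yields $\|A\|\leq\|\hat{A}\|$.

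There is no real obstacle here: the argument rests entirely on the triangle inequality together with sub-multiplicativity of the spectral norm applied to each block, and the only subtlety worth flagging is the conformable partitioning of $x$ so that the products $A_{ij}x_j$ are well defined. The statement is classical and appears as Lemma~2.10 in Zhou \emph{et al.}; the same reasoning carries over verbatim to any sub-multiplicative norm that is monotone under nonnegative entrywise majorization of vectors.
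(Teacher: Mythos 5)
Your proof is correct: the conformable partition of $x$, the triangle inequality plus $\|A_{ij}x_j\|\leq\|A_{ij}\|\|x_j\|$ on each block row, and the passage to the auxiliary vector $\hat{x}$ of block norms with $\|\hat{x}\|=\|x\|$ is exactly the standard argument. The paper itself gives no proof, deferring to Lemma~2.10 of the cited reference, and your derivation is essentially that same classical argument, so there is nothing to reconcile; only your closing remark about extending "verbatim" to arbitrary sub-multiplicative norms is stated loosely (the extension needs the vector norm to be compatible with the partition, as for induced $\ell_p$ norms), but it is an aside and does not affect the result as used in the paper.
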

 This lemma is true for any induced norm; see~\cite[Lemma 2.10]{zhou1996robust} for a proof.




\subsection{Proof of Proposition~\ref{prop:U}}

   Upon denoting the input data matrix of each rollout $i$ as $
        U^{(i)} = \begin{bmatrix} \hat{u}^{(i)}_0 & \ldots & \hat{u}^{(i)}_{T-1} \end{bmatrix} \in \mathbb{R}^{mT\times T},
   $    where
   $$
        \hat{u}^{(i)}_t = \begin{bmatrix} \left(u^{(i)}_t\right)^\tr,\, \ldots,\,  \left(u^{(i)}_0\right)^\tr,\, 0,\, \ldots,\, 0 \end{bmatrix}^\tr \in \mathbb{R}^{mT},
   $$
   for $t = 0, \ldots, T-1$,  we can write
   $$
   \begin{aligned}
     UU^\tr &= \sum_{i=1}^N U^{(i)}\left(U^{(i)}\right)^\tr \\
     &= \sum_{i=1}^N \sum_{t=0}^{T-1} \hat{u}^{(i)}_t(\hat{u}^{(i)}_t)^\tr  \\
     &= \sum_{t=0}^{T-1}  \sum_{i=1}^N \hat{u}^{(i)}_t(\hat{u}^{(i)}_t)^\tr. \\
   \end{aligned}
   $$
   For each $t$, the nonzero part of $\hat{u}^{(i)}_t$ follows a norm distribution with zero mean and covariance $\sigma_u^2I_{m(t+1)}$. According to Lemma~\ref{lemma:singularvalue}, we have with probability at least $1-\delta/T$
   $$
   \begin{aligned}
    &\sqrt{\frac{1}{\sigma_u^2}\lambda_{\min}\left( E_{t}\sum_{i=1}^N u_t^{(i)}\left(u_t^{(i)}\right)^\tr E_t^\tr \right)} \\
    \geq &\sqrt{N} - \sqrt{m(t+1)} - \sqrt{2\log(T/\delta)}, \quad t = 0, \ldots, T-1,
    \end{aligned}
    $$
    where $E_t$ is a basis matrix that extracts the nonzero block.

    Combining all of the estimates above via union bound, when $\sqrt{N} > \sqrt{mT} + \sqrt{2\log(T/\delta)}$,   by Lemma~\ref{lemma:PSDeigen}, we have with probability at least $1-\delta$,
    $$
     UU^\tr = \sum_{t=0}^{T-1}  \sum_{i=1}^N \hat{u}^{(i)}_t(\hat{u}^{(i)}_t)^\tr \succeq \sum_{i=1}^N \hat{u}^{(i)}_{T-1}(\hat{u}^{(i)}_{T-1})^\tr, \\
    $$
    which implies that
    $$
    \begin{aligned}
      \sqrt{\frac{1}{\sigma_u^2}\lambda_{\min} \left(UU^\tr\right)} &\geq  \sqrt{\frac{1}{\sigma_u^2}\lambda_{\min}\left(\sum_{i=1}^N u_{T-1}^{(i)}\left(u_{T-1}^{(i)}\right)^\tr \right)}\\
      &\geq \sqrt{N} - \sqrt{mT} - \sqrt{2\log(T/\delta)}.
     \end{aligned}
    $$
    The desired result in~\eqref{eq:UUmin} follows by choosing the value of $N$ such that
    $$
        \frac{1}{2}\sqrt{N} \geq \sqrt{mT} + \sqrt{2\log(T/\delta)}.
    $$
    This holds when $N \geq 8mT + 16 \log(T/\delta)$, considering the inequality $(a + b)^2 \leq 2(a^2 + b^2).$

\subsection{Proof of Proposition~\ref{prop:vU}}
For each rollout $i$, we can write
$$
\begin{aligned}
    &v^{(i)}(U^{(i)})^\tr \\
    = &\begin{bmatrix} v_0^{(i)} & v_1^{(i)} & \ldots & v_{T-1}^{(i)} \end{bmatrix}\begin{bmatrix}
        u_0^{(i)} & u_1^{(i)} & u_2^{(i)} & \ldots & u^{(i)}_{T-1} \\
        0 & u_0^{(i)} & u_1^{(i)} & \ldots & u^{(i)}_{T-2}  \\
         0 & 0 & u_0^{(i)} & \ldots & u^{(i)}_{T-3} \\
        \vdots & \vdots & \vdots & \ddots & \vdots \\
         0 & 0 & 0 & \ldots & u^{(i)}_{0} \\
        \end{bmatrix}^\tr  \\
        = &\begin{bmatrix} \displaystyle \sum_{k=0}^{T-1}v_k^{(i)}(u_k^{(i)})^\tr,\,  \displaystyle\sum_{k=0}^{T-2}v_{k+1}^{(i)}(u_k^{(i)})^\tr,\,  \ldots,\,  \displaystyle v^{(i)}_{T-1}(u^{(i)}_0)^\tr  \end{bmatrix}.
\end{aligned}
$$
Therefore, we have
\begin{equation*} 
\begin{aligned}
vU^\tr
= &\sum_{i=1}^{N} v^{(i)}(U^{(i)})^\tr  \\
     = &\displaystyle \sum_{i=1}^{N} \begin{bmatrix} \displaystyle\sum_{k=0}^{T-1}v_{k}^{(i)}(u_k^{(i)})^\tr,\,  \displaystyle\sum_{k=0}^{T-2}v_{k+1}^{(i)}(u_k^{(i)})^\tr,\, \ldots,\,  \displaystyle v^{(i)}_{T-1}(u^{(i)}_0)^\tr  \end{bmatrix}.
\end{aligned}
\end{equation*}
Applying Lemma~\ref{Lemma:spectralnorm} to each individual block,  when $N(T-j) \geq 2(m + l) \log(T/\delta)$, we have with probability at least $1 - \delta/T$,
    $$
    \begin{aligned}
             & \left\|\sum_{i=1}^{N}\sum_{k=0}^{T-j-1}v_{k+j}^{(i)}(u_k^{(i)})^\tr \right\|
        \leq4 \sigma_v\sigma_u c \sqrt{(T-j)}
    \end{aligned}
    $$
    with $c = \sqrt{N(m + l)\log(9T/\delta)}$ for $j =0 ,\ldots T-1$. Finally, union bounding over all these events,  by Lemma~\ref{lemma:blockmatrix},  if $N\geq 2(m + l) \log(T/\delta)$, we have with probability at least $1 - \delta$ the following inequality holds: 
    \begin{equation*} 
       \begin{aligned}
        &\,\left\|\sum_{i=1}^{N} v^{(i)}(U^{(i)})^\tr\right\| \\
        \leq &\, \left\|\begin{bmatrix} \left\|\displaystyle \sum_{i=1}^{N}\sum_{k=0}^{T-1}v_{k}^i(u_k^i)^\tr\right\|  &  \left\|\displaystyle\sum_{i=1}^{N}\sum_{k=0}^{T-2}v_{k+1}^i(u_k^i)^\tr\right\|  &  \ldots &  \displaystyle \left\|\sum_{i=1}^{N}v^i_{T-1}(u^i_0)^\tr\right\| \end{bmatrix}\right\| \\
        \leq & 4\sigma_v\sigma_u\left\|\begin{bmatrix} \sqrt{NT(m + l)\log(9T/\delta)},\, \sqrt{N(T-1)(m + l)\log(9T/\delta)},\,  \ldots,\,  \sqrt{N(m + l)\log(9T/\delta)} \end{bmatrix}\right\|\\
        = & \, 4 \sigma_v\sigma_u\sqrt{N(m + l)\log(9/\delta)}\left\|\begin{bmatrix}  \sqrt{T}  &  \sqrt{T-1}  &  \ldots &  1 \end{bmatrix}\right\|   \\
        = & \, 2\sigma_v\sigma_u\sqrt{2T(T+1)N(m+l)\log(9T/\delta)}.
    \end{aligned}
    \end{equation*}
    This completes the proof.

\subsection{Proof of Proposition~\ref{prop:WU}}
Here, we prove the bound for
$$
    \|WU^\tr\| = \left\|\sum_{i=1}^N W^{(i)}(U^{(i)})^\tr\right\|.
$$
We first give bounds on each $q \times m$ block of the data matrix $WU^\tr$ that can be expressed as
$$
    (WU^\tr)_{lj} = \sum_{i=1}^N \sum_{t=0}^{T-j} w^{(i)}_{t}(u^{(i)}_{t+l-j})^\tr, \, 1 \leq j \leq T,  1\leq l \leq T.
$$
Applying Lemma~\ref{Lemma:spectralnorm} to the submatrix above, if $N(T-j+1) > 4(q+m)\log(T/\delta) $, we have with probability at least $1 - \delta/T^2$
\begin{equation*}
      \left\|(WU^\tr)_{lj}\right\| \leq 4\sigma_w\sigma_u\sqrt{2N(T-j+1)(m+q)\log(9T/\delta)},
\end{equation*}
for $1\leq l \leq T, 1 \leq j \leq T$. We define a scaling index
$$
    \begin{aligned}
   M:=&\left\|\begin{bmatrix}  \sqrt{T} &\sqrt{T-1} & \sqrt{T-2} & \ldots & 1 \\
            * & \sqrt{T-1} & \sqrt{T-2}&\ldots & 1 \\
             * & * & \sqrt{T-2}& \ldots & 1 \\
                \vdots & \vdots & \vdots & \ddots & \vdots \\
                   * & * & *& \ldots &1   \end{bmatrix}\right\|
\end{aligned}
$$
where $*$ denotes the symmetric part. By Lemma~\ref{lemma:frobeniusnorm}, we have
$$
    \begin{aligned}
   M \leq \left\|\begin{bmatrix}  \sqrt{T} &\sqrt{T-1} & \sqrt{T-2} & \ldots & 1 \\
            * & \sqrt{T-1} & \sqrt{T-2}&\ldots & 1 \\
             * & * & \sqrt{T-2}& \ldots & 1 \\
                \vdots & \vdots & \vdots & \ddots & \vdots \\
                   * & * & *& \ldots &1   \end{bmatrix}\right\|_{\text{F}}               =&  \sqrt{2 \sum_{k=1}^T \left(\sum_{l=1}^{k} l\right) - (1 + 2 + \ldots + T)} \\
                  =& \sqrt{\sum_{k=1}^T (k^2+k) -  (1 + 2 + \ldots + T)} \\
                  =& \sqrt{\frac{T^3}{3} + \frac{T^2}{2} + \frac{T}{6}}.
\end{aligned}
$$

Finally, union bounding all the estimates of individual blocks, if $N > 4(q+m)\log(T/\delta)$, according to Lemma~\ref{lemma:blockmatrix}, we have with probability $1 - \delta$
$$
\begin{aligned}
    \|WU^\tr\| \leq & M \times 4\sigma_w\sigma_u\sqrt{2N(m+q)\log(9T/\delta)} \\
         \leq & \sqrt{\frac{T^3}{3} + \frac{T^2}{2} + \frac{T}{6}} 4\sigma_w\sigma_u\sqrt{2N(m+q)\log(9T/\delta)},
\end{aligned}
$$
which completes the proof.

\section{Non-zero initial state in each rollout} \label{appendix:initialstate}

In this section, we discuss the influence of non-zero initial state in each rollout on the OLS estimator. We highlight a minor modification of the theoretical bound in Theorem~\ref{theo:mainresult} when the initial state follows an iid Gaussian distribution.

Suppose that the initial state $x_0^{(i)}$ of the $i$-th rollout satisfies $x_0^{(i)} \sim \mathcal{N}(0, \sigma_{0}^2I_n)$. Each measurement $y_t, t = 0, \ldots, T-1$ in~\eqref{eq:output_t} becomes
\begin{equation} \label{eq:output_x0_t}
    \begin{aligned}
        y^{(i)}_{t} &= Cx^{(i)}_{t} + Du^{(i)}_{t} + D_v v_{t} \\
        &= \underbrace{\sum_{k=0}^{t-1}CA^k\left(Bu^{(i)}_{t-k-1} + B_w w^{(i)}_{t-k-1}\right)  + Du^{(i)}_{t}}_{\text{input \& process noise}} + \underbrace{CA^{t}x_0^{(i)}}_{\text{init. state}} + \underbrace{D_v v^{(i)}_{t}}_{\text{meas. noise}}, \\
    \end{aligned}
\end{equation}
which is affected by input and process noises, initial state, measurement noise. Upon defining
$$
    H = \begin{bmatrix} C & CA & CA^2& \cdots & CA^{T-1} \end{bmatrix} \in \mathbb{R}^{p \times nT},
$$
the measurement data~\eqref{eq:output_x0_t} in each rollout $i$ can be compactly written as
\begin{equation} \label{eq:output_y_x0}
 y^{(i)} = GU^{(i)} + FW^{(i)} + H \left(I_T \otimes x_0^{(i)}\right) + D_vv^{(i)}.
\end{equation}
Since the state information is not directly observable when $C \neq I$, we treat the initial state $x_0$ as noise, similar to the process/measurement noises $w_t, v_t$.


In this case, we can formulate the same OLS estimator~\eqref{eq:leastsquares}. Now, we define
$$
    X_0 = \begin{bmatrix} I_T\otimes x_0^{(1)} & \cdots & I_T\otimes x_0^{(N)} \end{bmatrix} \in \mathbb{R}^{nT \times NT}.
$$
Then, the estimation error in~\eqref{eq:error} becomes
\begin{equation} \label{eq:error_x0}
\begin{aligned}
    \hat{G} - G &= YU^\tr(UU^\tr)^{-1} - G  \\
                & = (Y - GU)U^\tr(UU^\tr)^{-1} \\
                & = (FW+HX_0 + D_vv)U^\tr(UU^\tr)^{-1}.
\end{aligned}
\end{equation}
Similar to Section~\ref{section:proofsketch}, we have
\begin{equation*}
    \|\hat{G} - G\| \leq \left\|(UU^\tr)^{-1}\right\|\left(\|F\|\left\|WU^\tr\right\| + \|H\| \|X_0U^{\tr}\|+\|D_v\|\left\|vU^\tr\right\|\right).
\end{equation*}
We only need to bound the error term introduced by the non-zero initial state, given by the following proposition.
\begin{proposition} \label{prop:XU}
Fix a $0<\delta <1$, and $N \geq 4(n+m) \log(T/\delta)$. We have with probability at least $1 - \delta$
    $$
          \|X_0U^\tr\| \leq 4\sigma_0\sigma_u\sqrt{T(T+1)N(m+n)\log(9T/\delta)}.
    \vspace{2mm}
    $$
\end{proposition}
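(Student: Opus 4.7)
The plan is to mirror the proof strategy used for Propositions~\ref{prop:vU} and~\ref{prop:WU}: unpack the block structure of $X_0U^\tr$, apply the Gaussian product bound (Lemma~\ref{Lemma:spectralnorm}) block-by-block, union bound over the blocks, and then assemble using the block-matrix norm inequality (Lemma~\ref{lemma:blockmatrix}).

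First I would write $X_0U^\tr = \sum_{i=1}^{N}(I_T\otimes x_0^{(i)})(U^{(i)})^\tr$ and compute the $n\times m$ blocks of the product. Because each $(I_T\otimes x_0^{(i)})$ is block-diagonal with $x_0^{(i)}$ on the diagonal and $(U^{(i)})^\tr$ is lower-triangular Toeplitz in its $1\times m$ blocks, a straightforward calculation shows that for indices $1\le r,c\le T$,
\begin{equation*}
\bigl(X_0U^\tr\bigr)_{r,c} \;=\; \sum_{i=1}^{N} x_0^{(i)}\bigl(u_{r-c}^{(i)}\bigr)^{\!\tr}\quad\text{if } r\ge c,\qquad 0 \text{ otherwise.}
\end{equation*}
This is the key structural observation and is simpler than the $WU^\tr$ block pattern: every nonzero block is a sum of exactly $N$ outer products of a Gaussian $x_0^{(i)}\sim\mathcal{N}(0,\sigma_0^2 I_n)$ and an independent Gaussian $u_{r-c}^{(i)}\sim\mathcal{N}(0,\sigma_u^2 I_m)$.

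Next I would apply Lemma~\ref{Lemma:spectralnorm} to each block with failure probability $\delta/T^2$. Provided $N\ge 2(n+m)\log(T^2/\delta)$ (which is implied by the hypothesis $N\ge 4(n+m)\log(T/\delta)$), each of the $T(T+1)/2$ nonzero blocks satisfies, with probability at least $1-\delta/T^2$,
\begin{equation*}
\bigl\|(X_0U^\tr)_{r,c}\bigr\| \;\le\; 4\sigma_0\sigma_u\sqrt{N(n+m)\log(9T^2/\delta)}.
\end{equation*}
A union bound over blocks then ensures this uniform bound holds for all $(r,c)$ simultaneously with probability at least $1-\delta$.

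Finally, I would invoke Lemma~\ref{lemma:blockmatrix} to control $\|X_0U^\tr\|$ by the spectral norm of the $T\times T$ lower-triangular matrix all of whose nonzero entries equal this common bound. Denoting that matrix by $\alpha L$ with $L$ the lower-triangular all-ones $T\times T$ matrix, Lemma~\ref{lemma:frobeniusnorm} yields $\|L\|\le \|L\|_{\mathrm{F}} = \sqrt{T(T+1)/2}$, so
\begin{equation*}
\|X_0U^\tr\| \;\le\; 4\sigma_0\sigma_u\sqrt{\tfrac{T(T+1)}{2}\,N(n+m)\log(9T^2/\delta)}.
\end{equation*}
Using $\log(9T^2/\delta) \le 2\log(9T/\delta)$ for $T\ge 1$ produces the stated bound $4\sigma_0\sigma_u\sqrt{T(T+1)N(m+n)\log(9T/\delta)}$.

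There is no real obstacle here; the main care-points are the block indexing (keeping track of which entry $u_{r-c}^{(i)}$ appears in each block) and the constant-juggling between $\log(9T^2/\delta)$ and $\log(9T/\delta)$. The proof is strictly easier than that of Proposition~\ref{prop:WU} because $X_0$ is block-diagonal (no shifting of noise indices), which is why the final rate involves $T(T+1)$ rather than the cubic-in-$T$ factor that appears for $\|WU^\tr\|$.
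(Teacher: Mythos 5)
Your proposal is correct and follows essentially the same route as the paper's proof: the same block decomposition $(X_0U^\tr)_{r,c}=\sum_i x_0^{(i)}(u_{r-c}^{(i)})^\tr$, a per-block application of Lemma~\ref{Lemma:spectralnorm} at level $\delta/T^2$, a union bound, and Lemmas~\ref{lemma:blockmatrix} and~\ref{lemma:frobeniusnorm} giving the factor $\sqrt{T(T+1)/2}$. The only difference is cosmetic bookkeeping — you absorb the $\log(9T^2/\delta)\le 2\log(9T/\delta)$ factor at the end, whereas the paper folds it into the per-block bound.
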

\begin{proof}
   The proof is similar to that of Proposition~\ref{prop:vU}/\ref{prop:WU}. Consider that
   $$
   \begin{aligned}
    \|X_0U^\tr\| &= \left\|\sum_{i=1}^N \left(I_T\otimes x_0^{(i)}\right)(U^{(i)})^\tr\right\| \\
    &=\left\|\sum_{i=1}^N \begin{bmatrix}
         x_0^{(i)}(u_0^{(i)})^\tr & 0 & 0 & \ldots & 0 \\
         x_0^{(i)}(u_1^{(i)})^\tr &  x_0^{(i)}(u_0^{(i)})^\tr & 0 & \ldots & 0  \\
         x_0^{(i)}(u_2^{(i)})^\tr &  x_0^{(i)}(u_1^{(i)})^\tr &  x_0^{(i)}(u_0^{(i)})^\tr & \ldots & 0 \\
        \vdots & \vdots & \vdots & \ddots & \vdots \\
          x_0^{(i)}(u^{(i)}_{T-1})^\tr &  x_0^{(i)}(u^{(i)}_{T-2})^\tr &  x_0^{(i)}(u^{(i)}_{T-3})^\tr & \ldots &  x_0^{(i)}(u^{(i)}_{0})^\tr \\
        \end{bmatrix}\right\|.
    \end{aligned}
$$
Applying Lemma~\ref{Lemma:spectralnorm} to each block of the matrix above, if $N > 4(n+m)\log(T/\delta) $, we have with probability at least $1 - \delta/T^2$
\begin{equation*}
      \left\|(X_0U^\tr)_{lj}\right\| \leq 4\sigma_0\sigma_u\sqrt{2N(m+n)\log(9T/\delta)}, \qquad 1\leq l \leq T, 1 \leq j \leq l.
\end{equation*}
Union bounding all the estimates of individual blocks, if $N > 4(n+m)\log(T/\delta)$, according to Lemma~\ref{lemma:blockmatrix}, we have with probability $1 - \delta$
$$
\begin{aligned}
    \|X_0U^\tr\| \leq & \left\| \begin{bmatrix} 1 & 0 & 0 & \ldots & 0 \\
    1 & 1 & 0 & \ldots & 0\\
    1 & 1 & 1 & \ldots & 0 \\
    \vdots & \vdots & \vdots & \ddots & \vdots \\
    1 & 1 & 1 &\ldots & 1\end{bmatrix} \right\| 4\sigma_0\sigma_u\sqrt{2N(n+m)\log(9T/\delta)} \\
   \leq & \left\| \begin{bmatrix} 1 & 0 & 0 & \ldots & 0 \\
    1 & 1 & 0 & \ldots & 0\\
    1 & 1 & 1 & \ldots & 0 \\
    \vdots & \vdots & \vdots & \ddots & \vdots \\
    1 & 1 & 1 &\ldots & 1\end{bmatrix} \right\|_{\text{F}} 4\sigma_0\sigma_u\sqrt{2N(n+m)\log(9T/\delta)} \\
         = & 4\sigma_0\sigma_u\sqrt{T(T+1)N(m+q)\log(9T/\delta)}.
\end{aligned}
$$
We complete the proof.
\end{proof}
By slightly adjusting the proof of Theorem~\ref{theo:mainresult}, we have the following corollary.
\begin{corollary} \label{corollary:initialstate}
 Suppose the initial state $x_0 \sim \mathcal{N}(0, \sigma_0^2 I_n)$, process/measurement noises $w_t \sim \mathcal{N}(0, \sigma_w^2 I_q), v_t \sim \mathcal{N}(0, \sigma_v^2 I_l)$, and we choose the input $u_t \sim \mathcal{N}(0, \sigma_u^2 I_m)$. Fix any $0<\delta <1$. If the number of rollouts satisfies $N \geq 8mT + 4(m+n+q+l+4)\log(4T/\delta)$, we have with probability at least $1 - \delta$
    \begin{equation} \label{eq:MainTheorem}
        \|\hat{G} - G\| \leq \frac{\sigma_0 C_0 + \sigma_v C_1 + \sigma_wC_2 }{\sigma_u}\sqrt{\frac{1}{N}},
    \end{equation}
    where
    $$
    \begin{aligned}
        C_0 &=  16\|H\|\sqrt{T(T+1)(m+n)\log(36T/\delta)},  \\
       C_1 &=  8\|D_v\|\sqrt{2T(T+1)(m+l)\log(36T/\delta)},  \\
       C_2 &= 16\|F\|\sqrt{\left(\frac{T^3}{3} + \frac{T^2}{2} + \frac{T}{6}\right)2(m+q)\log(36T/\delta)}. \\
    \end{aligned}
    \vspace{1mm}
    $$
\end{corollary}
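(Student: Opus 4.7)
The plan is to imitate the proof strategy used for Theorem~\ref{theo:mainresult} but with one additional term coming from the initial state. Starting from the error decomposition in equation~(\ref{eq:error_x0}), the triangle inequality together with submultiplicativity gives
\begin{equation*}
\|\hat{G} - G\| \leq \|(UU^\tr)^{-1}\|\bigl(\|F\|\,\|WU^\tr\| + \|H\|\,\|X_0 U^\tr\| + \|D_v\|\,\|vU^\tr\|\bigr).
\end{equation*}
Each of the four scalar norms on the right is already controlled individually by an existing high-probability estimate: $\|(UU^\tr)^{-1}\|$ by Proposition~\ref{prop:U}, $\|WU^\tr\|$ by Proposition~\ref{prop:WU}, $\|vU^\tr\|$ by Proposition~\ref{prop:vU}, and $\|X_0 U^\tr\|$ by the newly introduced Proposition~\ref{prop:XU}.

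Next I would do the union bound. Whereas the proof of Theorem~\ref{theo:mainresult} unioned three events (which is why $\log(27T/\delta) = \log(9 \cdot 3 \cdot T/\delta)$ and the threshold used $\log(3T/\delta)$), here we must union four events, so each is invoked at confidence $1 - \delta/4$. This immediately explains the substitutions $3 \to 4$ in $\log(4T/\delta)$ and $27 \to 36$ in the $\log(36T/\delta)$ appearing in $C_0, C_1, C_2$. The stated sample-size threshold $N \geq 8mT + 4(m+n+q+l+4)\log(4T/\delta)$ simultaneously dominates the four individual requirements: $N \geq 8mT + 16\log(4T/\delta)$ for Proposition~\ref{prop:U}, $N \geq 2(m+l)\log(4T/\delta)$ for Proposition~\ref{prop:vU}, $N \geq 4(m+q)\log(4T/\delta)$ for Proposition~\ref{prop:WU}, and $N \geq 4(n+m)\log(4T/\delta)$ for the new Proposition~\ref{prop:XU}, where the extra slack of $4(n+4)\log(4T/\delta)$ absorbs the presence of the initial-state term.

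On this high-probability event, I would plug in $\|(UU^\tr)^{-1}\| \leq 4/(\sigma_u^2 N)$ and the three $\sqrt{N}$-scale bounds on $\|WU^\tr\|$, $\|vU^\tr\|$, $\|X_0 U^\tr\|$, collect the constants $16\|F\|\sqrt{(T^3/3 + T^2/2 + T/6) \cdot 2(m+q)\log(36T/\delta)}$, $8\|D_v\|\sqrt{2T(T+1)(m+l)\log(36T/\delta)}$, and $16\|H\|\sqrt{T(T+1)(m+n)\log(36T/\delta)}$ into the definitions of $C_2$, $C_1$, $C_0$ respectively, and factor out $1/(\sigma_u\sqrt{N})$. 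The final bound then matches the displayed inequality in the corollary.

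I do not expect a genuine obstacle: the only substantive new work, Proposition~\ref{prop:XU}, is already available, and its proof mirrors that of Proposition~\ref{prop:WU} by exploiting the lower-triangular block structure of $X_0 U^\tr$ via Lemma~\ref{Lemma:spectralnorm}, Lemma~\ref{lemma:blockmatrix}, and Lemma~\ref{lemma:frobeniusnorm}. The mild care one must take is in the bookkeeping of the union bound (ensuring all four $\log(\cdot/\delta)$ arguments are consistently replaced by $\log(4T/\delta)$) and in verifying that the scalar $8mT$ term in the sample-size condition still comes solely from Proposition~\ref{prop:U} and is unaffected by the extra event.
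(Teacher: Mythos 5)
Your proposal is correct and follows essentially the same route as the paper, which simply states that the corollary follows by ``slightly adjusting the proof of Theorem~\ref{theo:mainresult}'': starting from the decomposition~\eqref{eq:error_x0}, invoking Propositions~\ref{prop:U}, \ref{prop:vU}, \ref{prop:WU}, and~\ref{prop:XU} each at level $\delta/4$, and combining via a union bound, which is exactly where the $\log(4T/\delta)$ threshold and the $\log(36T/\delta)$ factors in $C_0, C_1, C_2$ come from. Your constant bookkeeping (the factor $4/(\sigma_u^2 N)$ against the three $\sqrt{N}$-scale cross terms) and the verification that the stated sample-size condition dominates all four individual requirements are both accurate.
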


\section{Recovery of system parameters and Corollary~\ref{corollary:ABCD}} \label{appendix:Ho-kalman}

The celebrated Ho-Kalman algorithm~\cite{ho1966effective} generates a controllable and observable state-space realization $(A, B, C,D)$ from the Markov parameter matrix $G$ in~\eqref{eq:MarkovParameter}. Recall that an LTI system $(A, B, C,D)$ is called \emph{controllable} if
$$
    \text{rank}\left(\begin{bmatrix} B & AB & \ldots & A^{n-1}B \end{bmatrix}\right) = n,
$$
and \emph{observable} if
$$
    \text{rank}\left(\begin{bmatrix} C \\ CA \\ \vdots \\ CA^{n-1}\end{bmatrix}\right) = n.
$$
Note that state-space realizations from the Markov parameters $G$ are not unique, but they are only different up to a similarity transformation.

To apply the Ho-Kalman algorithm, we need to construct the Hankel matrix $\mathcal{H} \in \mathbb{R}^{pT_1 \times m(T_2+1)}$ from the Markov parameter $G$
$$
    \mathcal{H} = \begin{bmatrix} CB & CAB & CA^2B & \ldots & CA^{T_2}B \\
                        CAB & CA^2B & CA^3B & \ldots & CA^{T_2+1}B \\
                        CA^2B & CA^3B & CA^4B & \ldots & CA^{T_2 + 2}B \\
                        \vdots & \vdots & \vdots & \vdots & \vdots \\
                        CA^{T_1-1}B & CA^{T_1} & CA^{T_1 + 1} & \ldots & CA^{T_1 + T_2-1}B\end{bmatrix},
$$
where $T = T_1 + T_2 + 1$. We also denote $\mathcal{H}^{-}$ to be the $pT_1 \times mT_2$ Hankel matrix created by dropping the last block column of $\mathcal{H}$. It is easy to check that
$$
    \mathcal{H} = \begin{bmatrix} C \\ CA \\ \vdots \\ CA^{T_1-1} \end{bmatrix}\begin{bmatrix} B & AB & \ldots & A^{T_2}B \end{bmatrix}, \qquad     \mathcal{H}^{-} = \begin{bmatrix} C \\ CA \\ \vdots \\ CA^{T_1-1} \end{bmatrix}\begin{bmatrix} B & AB & \ldots & A^{T_2-1}B \end{bmatrix}.
$$
Then, under the assumption of the controllability and observability of $(A,B,C,D)$, both the Hankel matrix $\mathcal{H}$ and its modified version $\mathcal{H}^{-}$ are of rank $n$, as long as $\min(T_1, T_2) \geq n$.

\begin{center}
    \begin{tcolorbox}[width=0.8\textwidth,
                  boxsep=0pt,
                  colback=white
                  ]
The Ho-Kalman algorithm~\cite{ho1966effective} on the noiseless Markov parameter $G$ has the following steps:
\begin{enumerate}[leftmargin=5mm]
\setlength{\itemsep}{0ex}
    \item Perform the singular value decomposition (SVD): $\mathcal{H}^{-} = U\Sigma V$, where $\Sigma \in \mathbb{R}^{n \times n}$.
    \item Compute the observability matrix $\mathcal{O} = U\Sigma^{\frac{1}{2}}$, and return the first $p \times n$ submatrix of $\mathcal{O}$ as $C$;
    \item Compute the controllability matrix $\mathcal{C} = \Sigma^{\frac{1}{2}}V$, and return the  first $n \times m$ submatrix of $\mathcal{C}$ as $B$;
    \item Return $A=\mathcal{O}^\dagger \mathcal{H}^{+} \mathcal{C}^\dagger$, where $ \mathcal{H}^{+} $ is the $pT_1 \times mT_2$ Hankel matrix by dropping the first block column of~$\mathcal{H}$.
\end{enumerate}
Note that $D$ is the first $p \times m$ submatrix of ${G}$.
\end{tcolorbox}
\end{center}

We can also apply the Ho-Kalman algorithm on the estimated Markov parameter $\hat{G}$; see~\cite[Algorithm 1]{oymak2019non}. The work~\cite{oymak2019non} shows how the state-space estimations $\hat{A}, \hat{B}, \hat{C}, \hat{D}$ degrade when using a noisy estimation $\hat{G}$. Precisely, we denote $\hat{\mathcal{H}}$ as the analogous estimated version of $\mathcal{H}$. The following result is taken from~\cite[Corollary 5.4]{oymak2019non}.
\begin{proposition}[{\cite[Corollary 5.4]{oymak2019non}}] \label{corollary:hokalman}
    Suppose that $(A, B, C, D)$ in~\eqref{eq:dynamics} is controllable and observable. Let $(\hat{A}, \hat{B}, \hat{C}, \hat{D})$ is the output of the Ho-Kalman algorithm on the OLS estimation $\hat{G}$. Suppose
    $$
        \|\mathcal{H} - \hat{\mathcal{H}}\| \leq \frac{\sigma_{\min} \left(\mathcal{H}^{-}\right)}{4}.
    $$
    Then, there exists an unitary matrix $S$ such that
    $$
    \begin{aligned}
        \max\{ \|\hat{B} - SB\|, \|\hat{C} - CS^\tr \|\} \leq 5 \sqrt{n \|\mathcal{H} - \hat{\mathcal{H}}\|}, \qquad
        \|\hat{A} - SAS^\tr\| \leq \frac{50\sqrt{n\|\mathcal{H} - \hat{\mathcal{H}}\|} \|\mathcal{H}\|}{\sigma_{\min}^{3/2}\left(\mathcal{H}^{-}\right)}.
    \end{aligned}
    $$
\end{proposition}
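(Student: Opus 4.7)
The plan is to treat this as a matrix perturbation question about the Ho-Kalman procedure. Since the Ho-Kalman algorithm is a deterministic map from a Hankel matrix to a realization, the proof reduces to: (i) an SVD perturbation bound for the balanced factorization $\mathcal{H}^{-} = \mathcal{O}\mathcal{C}$, (ii) trivial submatrix extraction for $B$ and $C$, and (iii) a pseudoinverse perturbation bound for the identity $A = \mathcal{O}^\dagger \mathcal{H}^{+} \mathcal{C}^\dagger$. The assumption $\|\mathcal{H}-\hat{\mathcal{H}}\| \leq \sigma_{\min}(\mathcal{H}^{-})/4$ is exactly what one needs so that (a) the top $n$ singular values of $\hat{\mathcal{H}}^{-}$ are well-separated from the rest (Weyl guarantees $\sigma_n(\hat{\mathcal{H}}^{-}) \geq \tfrac{3}{4}\sigma_{\min}(\mathcal{H}^{-})$), and (b) the resulting constants remain small.

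The first and principal step is to relate the balanced factors $\hat{\mathcal{O}} = \hat{U}\hat{\Sigma}^{1/2}$, $\hat{\mathcal{C}} = \hat{\Sigma}^{1/2}\hat{V}^\tr$ of the rank-$n$ truncation of $\hat{\mathcal{H}}^{-}$ to $\mathcal{O},\mathcal{C}$ up to a common orthogonal transformation. I would invoke the standard balanced low-rank perturbation lemma (in the spirit of Tu--Boczar--Simchowitz--Soltanolkotabi--Recht or Wedin), which gives an orthogonal $S\in\mathbb{R}^{n\times n}$ satisfying
\begin{equation*}
  \max\bigl\{\|\hat{\mathcal{O}} - \mathcal{O}S^\tr\|,\ \|\hat{\mathcal{C}} - S\mathcal{C}\|\bigr\} \;\leq\; c\,\frac{\|\hat{\mathcal{H}}^{-} - \mathcal{H}^{-}\|}{\sqrt{\sigma_{\min}(\mathcal{H}^{-})}},
\end{equation*}
with a small absolute constant $c$. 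Since $\mathcal{H}^{-}$ is a submatrix of $\mathcal{H}$, one has $\|\hat{\mathcal{H}}^{-} - \mathcal{H}^{-}\| \leq \|\hat{\mathcal{H}} - \mathcal{H}\|$; together with $\sigma_{\min}(\mathcal{O}) = \sigma_{\min}(\mathcal{C}) = \sqrt{\sigma_{\min}(\mathcal{H}^{-})}$ (from the balanced SVD construction), this produces the $\sqrt{n\,\|\mathcal{H}-\hat{\mathcal{H}}\|}$ scaling claimed for $B$ and $C$.

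Second, because $C$ is literally the first $p$ rows of $\mathcal{O}$ and $B$ is the first $m$ columns of $\mathcal{C}$, block-row and block-column extraction are contractions in spectral norm, so
\begin{equation*}
  \|\hat{C} - CS^\tr\| \leq \|\hat{\mathcal{O}} - \mathcal{O}S^\tr\|,\qquad \|\hat{B} - SB\| \leq \|\hat{\mathcal{C}} - S\mathcal{C}\|,
\end{equation*}
and the factor $\sqrt{n}$ absorbs the Frobenius-to-spectral passage incurred when the perturbation lemma is stated in Frobenius norm. The constant $5$ then arises from tracking the explicit constant in the balanced perturbation lemma under the hypothesis $\|\mathcal{H}-\hat{\mathcal{H}}\|\leq \sigma_{\min}(\mathcal{H}^{-})/4$.

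The main obstacle is the bound on $\hat{A}-SAS^\tr$, where both pseudoinverses and the middle factor $\mathcal{H}^{+}$ are perturbed simultaneously. I would write $\hat{A} - SAS^\tr$ as a telescoping sum of three terms using the identity $S\mathcal{O}^\dagger = (\mathcal{O}S^\tr)^\dagger$ (valid since $S$ is orthogonal):
\begin{equation*}
  \hat{A} - SAS^\tr = (\hat{\mathcal{O}}^\dagger - S\mathcal{O}^\dagger)\hat{\mathcal{H}}^{+}\hat{\mathcal{C}}^\dagger + S\mathcal{O}^\dagger(\hat{\mathcal{H}}^{+} - \mathcal{H}^{+})\hat{\mathcal{C}}^\dagger + S\mathcal{O}^\dagger \mathcal{H}^{+}(\hat{\mathcal{C}}^\dagger - \mathcal{C}^\dagger S^\tr).
\end{equation*}
Each outer difference of pseudoinverses is controlled by Wedin's pseudoinverse perturbation theorem, giving $\|\hat{\mathcal{O}}^\dagger - S\mathcal{O}^\dagger\| \lesssim \|\hat{\mathcal{O}} - \mathcal{O}S^\tr\|/\sigma_{\min}^{2}(\mathcal{O})$ and analogously for $\mathcal{C}$; the middle difference satisfies $\|\hat{\mathcal{H}}^{+} - \mathcal{H}^{+}\| \leq \|\mathcal{H}-\hat{\mathcal{H}}\|$. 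Substituting the step-one estimate, using $\sigma_{\min}(\mathcal{O})\sigma_{\min}(\mathcal{C}) = \sigma_{\min}(\mathcal{H}^{-})$, and bounding $\|\mathcal{H}^{+}\|\leq\|\mathcal{H}\|$, the dominant term after simplification scales as $\sqrt{n\,\|\mathcal{H}-\hat{\mathcal{H}}\|}\,\|\mathcal{H}\|/\sigma_{\min}^{3/2}(\mathcal{H}^{-})$. The delicate part is the constant $50$: keeping it requires that the smallness condition on $\|\mathcal{H}-\hat{\mathcal{H}}\|$ is strong enough to absorb the $\hat{\mathcal{H}}^{+}\hat{\mathcal{C}}^\dagger$ and $\hat{\mathcal{O}}^\dagger$ factors on the right-hand side by $(1+o(1))$-versions of their noiseless counterparts, so that the total constant is bounded by a fixed multiple of the product of the constants from step one and from Wedin's bound.
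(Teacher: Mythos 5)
The paper does not prove this proposition at all: it is imported verbatim as \cite[Corollary 5.4]{oymak2019non}, so there is no in-paper argument to compare against. Your sketch is, in structure, a faithful reconstruction of the proof given in that reference: the balanced-factorization perturbation lemma of Tu et al.\ applied to $\mathcal{H}^{-}$ and the rank-$n$ truncation of $\hat{\mathcal{H}}^{-}$, contraction under block-row/column extraction for $\hat B$ and $\hat C$, and the three-term telescoping of $\hat{A}-SAS^\tr$ with pseudoinverse perturbation bounds for the $A$ estimate; the telescoping identity you write is algebraically correct and the submatrix inequalities $\|\hat{\mathcal{H}}^{-}-\mathcal{H}^{-}\|\le\|\hat{\mathcal{H}}-\mathcal{H}\|$ and $\|\hat{\mathcal{H}}^{+}-\mathcal{H}^{+}\|\le\|\hat{\mathcal{H}}-\mathcal{H}\|$ are valid. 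Two points deserve more care than the sketch gives them. First, the perturbation lemma must be applied to the \emph{rank-$n$ truncation} of $\hat{\mathcal{H}}^{-}$, not to $\hat{\mathcal{H}}^{-}$ itself; Eckart--Young then costs an extra factor of $2$ on the perturbation, and the $\sqrt{n}$ in the final bound arises because the difference between $\mathcal{H}^{-}$ and that truncation has rank at most $2n$, so its Frobenius norm is at most $\sqrt{2n}$ times its spectral norm --- this is where the $n$ actually enters, not merely a ``Frobenius-to-spectral passage'' in the abstract. Second, the conversion from the lemma's output $\|\Delta\|/\sqrt{\sigma_{\min}(\mathcal{H}^{-})}$ to the stated $\sqrt{\|\mathcal{H}-\hat{\mathcal{H}}\|}$ form uses the hypothesis $\|\mathcal{H}-\hat{\mathcal{H}}\|\le\sigma_{\min}(\mathcal{H}^{-})/4$ in an essential way (replacing $\sigma_{\min}(\mathcal{H}^{-})$ by $4\|\mathcal{H}-\hat{\mathcal{H}}\|$ in the denominator), and the same hypothesis with Weyl is what keeps $\sigma_{\min}(\hat{\mathcal{O}})$, $\sigma_{\min}(\hat{\mathcal{C}})$ bounded below so the Wedin terms close. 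With those details filled in, your plan yields bounds of the stated form; verifying the specific constants $5$ and $50$ is pure bookkeeping and is best discharged, as the paper does, by citing the source.
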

As stated in~\cite[Lemma 5.2]{oymak2019non}, we have
\begin{equation} \label{eq:HG}
    \|\mathcal{H} - \hat{\mathcal{H}}\| \leq \sqrt{\min\{T_1, T_2 + 1\}}\|G - \hat{G}\|.
\end{equation}
Since $\|\hat{D} - D\| \leq \|G - \hat{G}\|$,  for $N$ sufficiently large and $T$ large enough to have $\min\{T_1, T_2\} \geq n$, we can combine \eqref{eq:HG} and Proposition~\ref{corollary:hokalman} with Theorem~\ref{theo:mainresult} to arrive at the result in Corollary~\ref{corollary:ABCD} with a scaling of $\mathcal{O}(N^{-1/4})$. A recent refined analysis of Ho-Kalman algorithm in~\cite{sarkar2018near} enables a rate of estimation
$$
    \max\{  \|\hat{A} - SAS^\tr\|, \|\hat{B} - SB\|, \|\hat{C} - CS^\tr \|\} \leq \hat{C} \|G- \hat{G}\|,
$$
where $\hat{C}$ is a problem-dependent constant (see \cite[Section 5.3]{sarkar2018near} for details). Combining this result with Theorem~\ref{theo:mainresult}, we arrive at the result in Corollary~\ref{corollary:ABCD} with a scaling of $\mathcal{O}(N^{-1/2})$.

\section{Selecting the length of Markov Parameters $T$} \label{appendix:lengthT}

As shown in Theorem~\ref{theo:mainresult} and the discussions therein, the constants $C_1$ and $C_2$ depends on polynomials on the length of Markov parameter $T$, suggesting a longer Markov parameter $G$ requires more trajectories to achieve the same accuracy. In our multi-rollout setup, we note that $T$ is also the length of each experiment. Therefore, we might want to use a small $T$ to reduce the workload of running experiments and improve sample efficiency. However, to guarantee the rank condition on the Hankel matrices $\mathcal{H}, \mathcal{H}^{-}$ in the Ho-Kalman algorithm, we need to make sure that $\min\{T_1, T_2\} \geq n$, where $T_1 + T_2 +1 = T$. In principle, the value of $T$ should at least be $2n + 1$, where $n$ is the state dimension.

On the other hand, we can represent open-loop stable systems in the frequency domain as follows
\begin{equation} \label{eq:plantFIR}
 \mathbf{G}(z) = C(zI - A)^{-1}B + D = \sum_{i=0}^\infty G(k)\frac{1}{z^k},
\end{equation}
where $G(k) \in \mathbb{R}^{p \times m}$ denotes the $k$th spectral component. The form~\eqref{eq:plantFIR} is known as infinite impulse responses of $\mathbf{G}$. In particular, we have
$$
    G(k) = \begin{cases}
        D & k = 0, \\
        C A^{k-1} B& k \geq 1.
        \end{cases}
$$
Upon denoting the least-squares solution $\hat{G} = \begin{bmatrix} \hat{G}_0 & \hat{G}_1 & \ldots & \hat{G}_{T-1} \end{bmatrix}$, we form the estimated FIR dynamics
\begin{equation*} 
    \hat{\mathbf{G}} := \sum_{k=0}^{T-1} \hat{G}_k \frac{1}{z^k}.
\end{equation*}
If we want to bound the $\mathcal{H}_\infty$ norm of $\mathbf{G} - \hat{\mathbf{G}}$, we can choose a value of $T$ to balance the truncation error and OLS estimation error
\begin{equation*} 
\begin{aligned}
    \|\mathbf{G} - \hat{\mathbf{G}}\|_{\mathcal{H}_\infty} &= \left\| \sum_{t=0}^{T-1} \left(G(k) - \hat{G}_k\right) \frac{1}{z^k} + \sum_{k=T}^{\infty} G(k) \frac{1}{z^k}  \right\|_{\mathcal{H}_\infty} \\
    & \leq  \underbrace{\left\| \sum_{t=0}^{T-1} \left(G(k) - \hat{G}_k\right) \frac{1}{z^k}\right\|_{\mathcal{H}_\infty}}_{\text{OLS estimation error}} \; + \;\; \underbrace{\left\| \sum_{k=T}^{\infty} G(k) \frac{1}{z^k}  \right\|_{\mathcal{H}_\infty}}_{\text{FIR truncation error}}.
\end{aligned}
\end{equation*}
This strategy above has been widely used in robust controller synthesis after system identification; see e.g.,~\cite{tu2017non,zhou1996robust,simchowitz2020improper}, where an explicit state-space realization is not required.


\section{The length of Markov parameters and the rollout length can be different} \label{appendix:rollout_length}

In OLS estimator~\eqref{eq:leastsquares}, the rollout length of each experiment and the length of Markov Parameters are the same. It is possible to use different lengths in the multi-rollout setup. Suppose we aim to learn $T_1$ Markov parameters and the rollout length is $T_2$. Naturally, we need to ensure that $T_2 \geq T_1$.

Now for each rollout $i$, we organize all the data as
\begin{equation}
    \begin{aligned}
    y^{(i)} &= \begin{bmatrix}y^{(i)}_0 & y^{(i)}_1 & \ldots & y^{(i)}_{T_2-1} \end{bmatrix} \in \mathbb{R}^{p \times T_2}, \\
    v^{(i)} &= \begin{bmatrix}v^{(i)}_0 & v^{(i)}_1 & \ldots & v^{(i)}_{T_2-1} \end{bmatrix} \in \mathbb{R}^{l \times T_2}.
\end{aligned}
\end{equation}
Each measurement $y_t, t = 0, \ldots, T_2 - 1$ can be expanded recursively as (where we used the fact $x_0^{(i)} = 0$)
\begin{equation} \label{eq:output_t_v2}
    \begin{aligned}
        y^{(i)}_{t} &= Cx^{(i)}_{t} + Du^{(i)}_{t} + D_v v_{t} \\
        &= \sum_{k=0}^{t-1}CA^k\left(Bu^{(i)}_{t-k-1} + B_w w^{(i)}_{t-k-1}\right) + Du^{(i)}_{t} + D_v v^{(i)}_{t}. \\
    \end{aligned}
\end{equation}
Upon defining two upper-triangular Toeplitz matrices corresponding to $u^{(i)}$ and  $w^{(i)}$, respectively,
\begin{equation} \label{eq:U&W_v2}
\begin{aligned}
    \hat{U}^{(i)} &= \begin{bmatrix}
        u_0^{(i)} & u_1^{(i)} & u_2^{(i)} & \ldots & u^{(i)}_{T_1-1} & \ldots & u^{(i)}_{T_2-1}\\
        0 & u_0^{(i)} & u_1^{(i)} & \ldots & u^{(i)}_{T_1-2}  & \ldots & u^{(i)}_{T_2-2}\\
         0 & 0 & u_0^{(i)} & \ldots & u^{(i)}_{T_1-3} & \ldots & u^{(i)}_{T_2-3}\\
        \vdots & \vdots & \vdots & \ddots & \vdots & \ddots & \vdots\\
         0 & 0 & 0 & \ldots & u^{(i)}_{0} & \ldots &  u^{(i)}_{T_2 - T_1}\\
        \end{bmatrix}  \in \mathbb{R}^{mT_1 \times T_2}, \\
        \hat{W}^{(i)} &= \begin{bmatrix}
        w_0^{(i)} & w_1^{(i)} & w_2^{(i)} & \ldots & w^{(i)}_{T-1}  & \ldots & w^{(i)}_{T_2-1}\\
        0 & w_0^{(i)} & w_1^{(i)} & \ldots & w^{(i)}_{T-2} & \ldots & w^{(i)}_{T_2-2} \\
         0 & 0 & w_0^{(i)} & \ldots & w^{(i)}_{T-3} & \ldots & w^{(i)}_{T_2-3}\\
        \vdots & \vdots & \vdots & \ddots & \vdots & \ddots & \vdots\\
         0 & 0 & 0 & \ldots & w^{(i)}_{0} & \ldots & w^{(i)}_{T_2-T_1} \\
        \end{bmatrix} \in \mathbb{R}^{qT_1 \times T_2},
\end{aligned}
\end{equation}
and
$$
\begin{aligned}
F = \begin{bmatrix} 0 &  CB_w & CAB_w & \ldots & CA^{T_1-2}B_w \end{bmatrix} \in \mathbb{R}^{p \times qT_1},
\end{aligned}
$$
and a residual signal matrix
$$
\begin{aligned}
E^{(i)} = CA^{T_1-1} \begin{bmatrix} 0 & \ldots & 0 &  x_1^{(i)} & x_2^{(i)} & \ldots & x_{T_2 -T_1 + 1}^{(i)} \end{bmatrix} \in \mathbb{R}^{p \times T_2},
\end{aligned}
$$
the measurement data~\eqref{eq:output_t_v2} in each rollout $i$ can be compactly written as
\begin{equation} \label{eq:outputi_v2}
    y^{(i)} =  G\hat{U}^{(i)} + \underbrace{F\hat{W}^{(i)}}_{\text{proc. noise}} + \underbrace{D_vv^{(i)}}_{\text{meas. noie}} + \underbrace{E^{(i)}}_{\text{init. noie}},
\end{equation}
where the output is affected by three types of noises: process noise, measurement noise, and the noise introduced by the non-zero state $x_{t - T_1 + 1}$. Note that the noise terms $W^{(i)}$, $v^{(i)}$ and $E^{(i)}$ are zero-mean. We form the OLS problem as
\begin{equation} \label{eq:leastsquares_v2}
    \hat{G} = \arg\min_{X \in \mathbb{R}^{p \times mT_1}} \sum_{i=1}^N \|y^{(i)}- X \hat{U}^{(i)}\|^2_{\text{F}}.
\end{equation}
Defining our label matrix $Y$ and input data matrix $U$ as
$$
\begin{aligned}
 Y &= \begin{bmatrix} y^{(1)} & \ldots& y^{(N)} \end{bmatrix} \in \mathbb{R}^{p \times NT_2 },\\
 \hat{U} &= \begin{bmatrix} \hat{U}^{(1)}& \ldots& \hat{U}^{(N)}  \end{bmatrix} \in \mathbb{R}^{mT_1 \times NT_2},
\end{aligned}
$$
the OLS becomes
$$
    \min_{X \in \mathbb{R}^{p \times mT_1}} \quad \|Y - X\hat{U}\|_{\text{F}}^2,
$$
and the least square solution is $\hat{G} = Y\hat{U}^\dagger$. Following the analysis in Theorem~\ref{theo:mainresult}, it is not difficult to see that the OLS estimator~\eqref{eq:leastsquares_v2} returns a consistent estimation.

\textit{Comparing the OLS estimator~\eqref{eq:leastsquares} and the OLS estimator~\eqref{eq:leastsquares_v2}:} It is easy to see that they are equivalent when $T_1 = T_2$. When fixing the length of Markov parameter $T_1$ and increasing the rollout length $T_2$, we have two methods to estimate the $T_1$  Markov parameters:
$$
    G_{T_1} = \begin{bmatrix} D & CB & CAB& \ldots & CA^{T_1 - 1}B \end{bmatrix} \in \mathbb{R}^{p \times mT_1}
$$
The first method is to use the OLS estimator~\eqref{eq:leastsquares} and only take the first $T_1$ Markov parameters from its solution; the second method is to use the OLS estimator~\eqref{eq:leastsquares_v2}. Note that in the presence of no process noise $w_t = 0$, we can see that there is always an extra noise term $E^{(i)}$ in~\eqref{eq:outputi_v2}. This means the  OLS estimator~\eqref{eq:leastsquares_v2} is not as good as the OLS estimator~\eqref{eq:leastsquares} in this case. This is confirmed by the numerical results shown in Figure~\ref{fig:stable_varyingT1T2}. When there is process noise,  the OLS estimators~\eqref{eq:leastsquares} and~\eqref{eq:leastsquares_v2} have different balance: the constant $F$ is increasing in~\eqref{eq:leastsquares}, while the extra noise term $E^{(i)}$ in~\eqref{eq:leastsquares_v2} is growing, when increasing the rollout length $T_2$.  Both these two OLS estimators suffer from the system instability if there is process noise. In  Figure~\ref{fig:stable_varyingT1T2}, the results from the OLS estimator~\eqref{eq:leastsquares} is numerically better than the OLS estimator~\eqref{eq:leastsquares_v2} for this case.

\begin{figure*}[t]
\setlength{\abovecaptionskip}{2pt}
    \centerfloat
	{
   \vspace{-10mm} \includegraphics[scale=0.7]{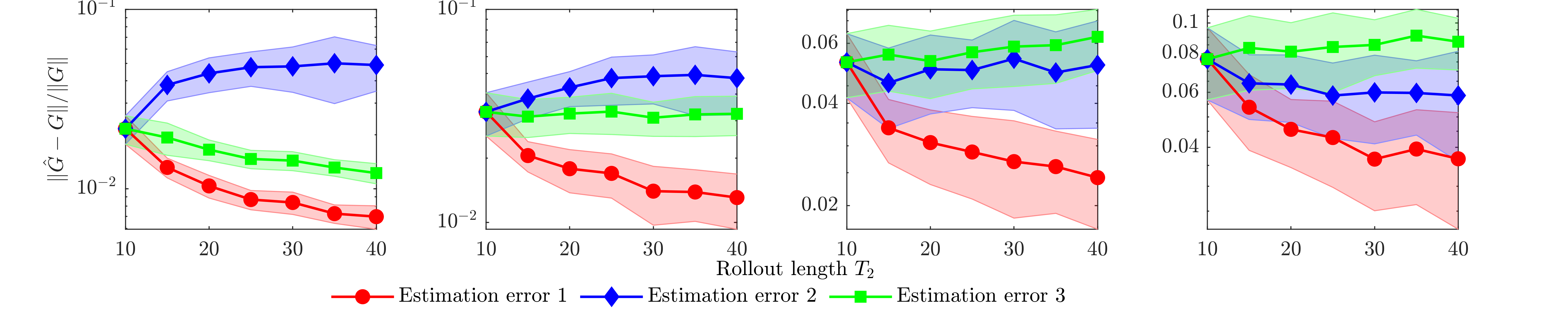}
    }
    \caption{Estimation error for system~\eqref{eq:unstable} with fixed $T_1 = 10$. We varied rollout length $T_2$ and process noises. Left to right: $\sigma_w = 0$; $\sigma_w = 0.2$; $\sigma_w = 0.4$; $\sigma_w = 0.6$. Estimation error 1: the result from OLS estimator~\eqref{eq:leastsquares}, and only take the first $T_1$ Markov parameters; Estimation error 2: the result from OLS estimator~\eqref{eq:leastsquares_v2}; Estimation error 3: the result from OLS estimator~\eqref{eq:leastsquares} and compare it with $T_2$ Markov parameters.}
    \label{fig:stable_varyingT1T2}
 \end{figure*}
\vspace{5mm}

\bibliographystyle{ieeetr}
\bibliography{references}

\end{document}